\newtheorem{rema}{Remark}
\newtheorem{lemma}{Lemma}
\newtheorem{prop}{Proposition}
\newtheorem{thm}{Theorem}
\newtheorem{defin}{Definition}
\def \n{\Vert}
\newcommand{\E}{\mathop{\mathbb{E}}}
\newcommand{\dist}{\mathop{\mathrm{dist}}}
\renewcommand{\P}{\mathop{\mathbb{P}}}
\def\R{{\mathbb{R}}}
\def\N{{\mathbb{N}}}
\def\F{{\cal{F}}}
\def\|{\,|\,}
\def\bn#1\en{\begin{align*}#1\end{align*}}
\def\bnn#1\enn{\begin{align}#1\end{align}}
\title{CONSTRUCTIVE AND CONSISTENT ESTIMATION OF QUADRATIC MINIMAX}
\begin{document} 
\date{}
\author{Philip Kennerberg and Ernst C. Wit}
\maketitle


\begin{abstract}
We consider $k$ square integrable random variables $Y_1,...,Y_k$ and $k$ random (row) vectors of length $p$, $X_1,...,X_k$ such that $X_i(l)$ is square integrable for $1\le i\le k$ and $1\le l\le p$. No assumptions whatsoever are made of any relationship between the $X_i$:s and $Y_i$:s. We shall refer to each pairing of $X_i$ and $Y_i$ as an environment. We form the square risk functions $R_i(\beta)=\E\left[(Y_i-\beta X_i)^2\right]$ for every environment and consider $m$ affine combinations of these $k$ risk functions. Next, we define a parameter space $\Theta$ where we associate each point with a subset of the unique elements of the covariance matrix of $(X_i,Y_i)$ for an environment. Then we study estimation of the $\arg\min$-solution set of the maximum of a the $m$ affine combinations the of quadratic risk functions. We provide a constructive method for estimating the entire $\arg\min$-solution set which is consistent almost surely outside a zero set in $\Theta^k$. This method is computationally expensive, since it involves solving polynomials of general degree. To overcome this, we define another approximate estimator that also provides a consistent estimation of the solution set based on the bisection method, which is computationally much more efficient. We apply the method to worst risk minimization in the setting of structural equation models. 
\end{abstract}

\section{Introduction}
Minimax estimation is used in statistical decision theory to evaluate the performance of estimators in the worst-case scenario. The main idea is to find estimators that minimize the maximum possible risk or loss over all possible parameter values in a given set. Minimax estimation is particularly important in contexts where there is uncertainty about the true parameter value or even the true underlying model, and where the consequences of estimation errors are severe. 

Minimax estimation is used in various contexts, such as robust estimation \citep{huber1981robust,hampel2011robust}. When dealing with data that may be contaminated by outliers or other forms of corruption, minimax estimators provide robustness by minimizing the maximum possible loss. In the context of non-parametric estimation where the underlying distribution of the data is unknown or does not belong to any parametric family, minimax estimation provides a framework for constructing estimators that perform well under various distributional assumptions \citep{vaart1998asymptotic,tsybakov2008introduction}. In the context of hypothesis testing, minimax approaches are used to design tests that control the probability of type I and type II errors under worst-case scenarios \citep{lehmann2005testing}. Instead, in Bayesian statistics, minimax estimation can be used to derive minimax estimators under certain loss functions, providing a frequentist perspective on Bayesian inference \citep{berger1985statistical}.

The minimax of a finite set of quadratic risks without any model assumptions on the regressors or response variables (other than finite second moments) combines both aspects of non-parametric estimation and linear minimax regression. It is linear in the sense that we seek the best linear fit in a minimax least-squares sense. It is non-parametric in the sense that we make no model assumptions what-so-ever. 

In standard linear regression one assumes a linear relation between the regressors and the response variable. This assumption greatly facilitates the theory since it leads to an unbiased estimator. Nevertheless, as is often pointed out, this assumption is more flexible than it leads on at first sight, since some response-regressor relationships can be transformed into a linear one \citep{wood2017generalized,atkinson2021box}. Nonetheless, this is not applicable to all type of relationships, and, moreover, it does in such cases assume a \textit{known} relationship. 

In section~\ref{sec:theproblem} we describe the general minimax problem we are considering, namely the minimax of an affine combination of risk quadratic functions. In section~\ref{sec:estimation} we derive a consistent (a.s.) estimator of the entire solution set. Other than the theoretical value of studying this problem, it is also valuable in for instance worst-risk minimization in the context of structural equation models. In section~\ref{semsec} we provide an example of such an application. We conclude the paper in section~\ref{sec:proof} with the proof of the main theorem in the paper. 

\section{The problem}
\label{sec:theproblem}

We consider $k$ arbitrary square integrable random variables $Y_1,...,Y_k$ and $k$ arbitrary random (column) vectors of length $p$, $X_1,...,X_k$ such that $X_u(l)$ is square integrable for $1\le u\le k$ and $1\le l\le p$. We shall refer to each pairing of $X_u$ and $Y_u$ as an environment. For any row vector $\beta\in\R^p$ we form the square risk functions $R_u(\beta)=\E\left[(Y_u-\beta X_u)^2\right]$ for every environment. The main objective of this paper is to study minimax of affine combinations of quadratic risk functions of the form
\begin{align}\label{argm}
	\mathcal{B}= \arg\min_{\beta\in\R^p} \bigvee_{i=1}^m\left(\kappa_i+ \sum_{u=1}^kw_i(u) R_u(\beta)\right),
\end{align}
where $m\ge 1$, (the intercepts) $\kappa_i\ge 0$, the weights $\{w_i(u)\}_{u=1}^k$ are either non-negative, with at least one positive weight or non-positive, with at least one negative weight, for each $i=1,...,k$ and for at least one $i$ the weights are non-negative (so that whenever $\mathcal{B}$ is non-empty, any minimizer gives a positive minimum). We assume that the weights and intercepts are all known and fixed (i.e. we do not seek to estimate them). Note that $\bigvee_{i=1}^m\left(\kappa_i+\sum_{u=1}^kw_i(u) R_u(\beta)\right)$ is not always convex (unless all weights are non-negative), never concave and in general we may therefore have that the number of elements in $\mathcal{B}$ is greater than one. A special case of \eqref{argm} that is of particular interest, is when $m=k$ and $\kappa_i=0$, $w_i(i)=1$, $w_i(u)=0, i\not=u$, in this case
\begin{align}\label{classic}
	\mathcal{B}= \arg\min_{\beta\in\R^p} \bigvee_{i=1}^k R_i(\beta),
\end{align}
the classical quadratic minimax. For each environments $1\le u\le k$ we can associate a point $\theta_u\in\Theta=\R^{\left(\frac{p}{2}+1\right)(p+1)}$, where the entries in $\theta_u$ are given by the unique elements in the covariance matrix corresponding to the vector $\left(X_u(1),...,X_u(p),Y_u\right)$. Denote 
\begin{itemize}
	\item $f_i(\beta)=\kappa_i+\sum_{u=1}^kw_i(u) R_u(\beta)$, $i=1,...,m$.
	\item $f(\beta)=\bigvee_{i=1}^mf_i(\beta)$.
\end{itemize}
Since the weights and intercepts are fixed, all $f_i$'s are determined by a parameter vector $\theta^{(k)}\in \Theta^k$.

\section{Estimation of the population minimizer}
\label{sec:estimation}
Let us now set up the framework for estimation of \eqref{argm}. We assume we have $k$ i.i.d. sequences $\{(Y_{u,v},X_{u,v})\}_{v=1}^\infty$ where $Y_{u,v}\stackrel{d}{=}Y_u$ and $X_{u,v}(l)\stackrel{d}{=}X_u(l)$ for $1\le l\le p$.
Suppose we gather the first $n_u$ samples from $\{(Y_{u,v},X_{u,v})\}_{v=1}^\infty$ for each $1\le u\le k$. We denote this multi-index $\textbf{n}=\left\{n_{1},...,n_{k}\right\}$, let $\mathbb{X}_u(\textbf{n})$ be the $n_{u}\times p$ matrix whose rows are $X_{u,1},...,X_{u,n_u}$ (from top to bottom) and similarly let $\mathbb{Y}_u(\textbf{n})$ be the $n_{u}\times 1$ column vectors whose entries are $Y_{u,1},...,Y_{u,n_u}$ (from top to bottom). Let 
\begin{itemize}
	\item $G^u=\E\left[X_u^TX_u\right]$ 
	\item $\mathbb{G}^u=\frac{1}{n_{u}}\mathbb{X}_u(\textbf{n})^T\mathbb{X}_u(\textbf{n})$. 
	\item $Z^u=\E\left[X_uY_u\right]$ and
	\item $\mathbb{Z}^u=\frac{1}{n_{u}}\sum_{l=1}^{n_{u}}X_{u,l}Y_{u,l}$. 
\end{itemize}
We also let 
$$\hat{R}_u(\beta,\textbf{n})=\frac{1}{n_u}\sum_{l=1}\left(Y_{u,l}-\beta X_{u,l}\right)^2,$$
$$\hat{f}_i(\beta,\textbf{n})=\kappa_i+\sum_{u=1}^kw_i(u)\hat{R}_u(\beta,\textbf{n}),$$
and consider the set of corresponding plug-in estimators to \eqref{argm},
\begin{align}\label{plugin}
	&\hat{B}(\textbf{n})=\arg\min_{\beta\in\R^p} \bigvee_{i=1}^k \hat{f}_i(\beta,\textbf{n}).
\end{align}
Note that the search space for solutions above is all of $\R^p$, so only proving consistency does not give us a "recipe" for \textit{how} to actually find these solutions. We will start by describing a constructive set of solutions for (\ref{plugin}), for which we prove the consistency. The minimization involves solving a high-degree polynomial, which is computationally expensive. We therefore provide a computationally much more efficient estimator, which is also consistent, in section~\ref{sec:constructive}.

\subsection{Consistency}

We now provide the necessary ingredients for doing a constructive estimation of the solution set \eqref{argm}. First let us describe the types of convergences we consider for the solution sets. Let $d(x,E)=\inf\{\n y-x\n: y\in E\}$ for $x\in\R^p$, $E\subset\R^p$, where $\n.\n$ denotes the Euclidean norm. 
\begin{defin}
	We shall say that a sequence of sets $S_n\in\R^p$ converges asymptotically one-to-one to a finite set $S\in\R^p$, denoted $S_n\uparrow S$, if $d(s_n,S)\to 0$ for all $s_n\in S_n$ and there exists a $\delta>0$ such that for large enough $n$, $\n s_1-s_2 \n>\delta$ for any two distinct $s_1,s_2\in S_n$.
\end{defin}
If we have a multi-indexed sequence of sets $S_\textbf{n}$ ($\textbf{n}=(n_1,...,n_k)$) then we will use the convention that $S_\textbf{n}\uparrow S$ if $S_{n_1\wedge...\wedge n_k}\uparrow S$.
The motivation for using the term "one-to-one" comes from the fact that elements of the converging sequence of sets do not converge to the same element in $S$, so they must therefore converge to a subset of $S$. To be more formal, if $S_n\uparrow S$ there exists no subsequence $\{n_k\}_k$ and no element $s\in S$ such that $s^1_{n_k},s^2_{n_k}\in S_{n_k}$, $s^1_{n_k}\not=s^2_{n_k}$, $\n s^1_{n_k}-s\n\to 0$ and $\n s^2_{n_k}-s\n\to 0$ as $k\to\infty$. Moreover,
\begin{prop}
	If $S_n\uparrow S$ then there exists some $N$ such that for $n\ge N$, $|S_n|\le |S|$ ($|.|$ denotes the number of elements).
\end{prop}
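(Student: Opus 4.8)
The plan is a direct pigeonhole argument built from the two ingredients in the definition of $S_n\uparrow S$. Let $\delta>0$ be the separation constant, so that there is a threshold $N_2$ past which any two distinct points of $S_n$ lie more than $\delta$ apart, and use the convergence $d(s,S)\to 0$ (uniformly over $s\in S_n$) to obtain a threshold $N_1$ such that for $n\ge N_1$ every point of $S_n$ lies within $\delta/2$ of $S$. Set $N=\max(N_1,N_2)$ and fix an arbitrary $n\ge N$.

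For each $s\in S_n$ I would choose a nearest point $\pi(s)\in S$; this is legitimate because $S$ is finite, hence closed, so the infimum defining $d(s,S)$ is attained, and by the choice of $N_1$ we have $\n s-\pi(s)\n<\delta/2$. This produces a map $\pi\colon S_n\to S$, and the key step is to show $\pi$ is injective. If $\pi(s_1)=\pi(s_2)=:t$ for two distinct points $s_1,s_2\in S_n$, the triangle inequality gives $\n s_1-s_2\n\le\n s_1-t\n+\n t-s_2\n<\delta$, contradicting the $\delta$-separation of $S_n$ which holds because $n\ge N_2$. Hence $\pi$ embeds $S_n$ injectively into the finite set $S$, so $S_n$ is finite and $|S_n|\le|S|$ for every $n\ge N$.

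There is essentially no hard step here; the only points requiring a little care are that the two defining properties of $S_n\uparrow S$ must be invoked over the \emph{same} range of $n$ — handled by taking $N$ to be the larger of the two thresholds — and that "$d(s_n,S)\to 0$ for all $s_n\in S_n$" be read as the uniform statement $\sup_{s\in S_n}d(s,S)\to 0$ (equivalently, no choice function $n\mapsto s_n\in S_n$ can keep $d(s_n,S)$ bounded away from $0$), which is precisely what makes the threshold $N_1$ independent of the particular nearest points selected.
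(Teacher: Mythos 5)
Your proof is correct and is essentially the same argument as the paper's: both rest on choosing $n$ large enough that every point of $S_n$ lies within $\delta/2$ of some point of $S$, and then applying the pigeonhole principle together with the $\delta$-separation of $S_n$ to rule out two distinct points of $S_n$ sharing a nearest neighbour in $S$. You phrase it as constructing an injection $\pi\colon S_n\to S$ while the paper argues by contradiction, but this is only a cosmetic difference.
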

\begin{proof}
	Suppose the opposite. Choose $n$ so large that $d(S_n,S)<\delta/2$. If $|S_n|>|S|$ there must exist (by the pigeon hole principle) $s_1,s_2\in S_n$ and $s\in S$ such that $\n s_1-s\n<\delta/2$ and $\n s_2-s \n<\delta/2$ but then $\n s_1 -s_2\n <\delta$ 
\end{proof}
To capture what we on the other hand mean by "asymptotically bijective" convergence of a sequence of sets $\{S_n\}_n$ towards a finite set $S$ (i.e. every point in $S$ is approached by exactly "one point" from the sequence $\{S_n\}_n$) we have the following definition. 
\begin{defin} For two finite subsets of $\R^p$, $A$ and $B$ we set
	$$\dist(A,B)=1_{|A|\not=|B|}+\sum_{a\in A}\inf\{\n a-b\n:b\in B\}+\sum_{b\in B}\inf\{\n a-b\n:b\in A\}. $$
	In addition if $\{A_n\}_n$ is a sequence of finite subsets in $\R^p$ then we say $A_n\longrightarrow B$ if $\dist(A_n,B)\to 0$ as $n\to\infty$.
\end{defin}
Again, if we have a multi-indexed sequence of sets $S_\textbf{n}$ ($\textbf{n}=(n_1,...,n_k)$) then we will use the convention that $S_\textbf{n}\longrightarrow S$ if $S_{n_1\wedge...\wedge n_k}\longrightarrow S$.
\begin{prop}
	If $S\subset\R^p$, $|S|<\infty$, $S_n\longrightarrow S$ then there exists some $N\in \N$ such that for $n\ge N$ we can decompose $S_n=\{s^1_n\}\cup...\cup\{s^{|S|}_n\}$ so that $\lim_{n\to\infty}s^i_n=s^i\in S$, where $s^i\not=s^j$ if $i\not=j$. While, for each $s\in S$, $\exists \hspace{1mm}\{s_n\}_n, s_n\in S_n$ such that $\lim_{n\to\infty}s_n=s$.
\end{prop}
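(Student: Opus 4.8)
The plan is to squeeze all the needed information out of the single hypothesis $\dist(S_n,S)\to 0$ by reading off its three contributions separately: the cardinality indicator and the two sums. First I would observe that for all large $n$ we have $\dist(S_n,S)<1$, which forces $1_{|S_n|\neq|S|}=0$, i.e.\ $|S_n|=|S|=:M$ eventually. (If $M=0$ the statement is vacuous, so assume $M\ge 1$.) Write $S=\{s^1,\dots,s^M\}$ with the $s^i$ pairwise distinct, and set $\delta_0=\tfrac13\min_{i\neq j}\n s^i-s^j\n$ (and $\delta_0=1$, say, if $M=1$), so that $\delta_0>0$ and the closed balls $\overline{B}(s^i,\delta_0)$, $1\le i\le M$, are pairwise disjoint.

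Next I would build an asymptotic matching between $S_n$ and $S$. For $n$ large enough that $\dist(S_n,S)<\delta_0$, every summand of the two sums is $<\delta_0$; in particular each $a\in S_n$ satisfies $\inf\{\n a-b\n:b\in S\}<\delta_0$, so $a$ lies in exactly one ball $\overline{B}(s^i,\delta_0)$ and we may define $\phi_n(a)$ to be that unique $s^i$. Symmetrically, for each $s^i$ the bound $\inf\{\n a-s^i\n:a\in S_n\}<\delta_0$ produces some $a\in S_n$ with $\phi_n(a)=s^i$, so $\phi_n\colon S_n\to S$ is surjective; since $|S_n|=|S|=M$, it is in fact a bijection. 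Letting $s^i_n:=\phi_n^{-1}(s^i)$ gives the claimed decomposition $S_n=\{s^1_n\}\cup\dots\cup\{s^M_n\}$ into $M$ distinct points, with $\n s^i_n-s^i\n<\delta_0$ for all large $n$.

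Finally I would promote this crude bound to genuine convergence. Since $s^i_n\in\overline B(s^i,\delta_0)$ and $\delta_0<\tfrac12\min_{j\neq i}\n s^i-s^j\n$, the nearest point of $S$ to $s^i_n$ is $s^i$ itself, hence $\n s^i_n-s^i\n=\inf\{\n s^i_n-b\n:b\in S\}$; but this is one of the nonnegative summands making up $\sum_{a\in S_n}\inf\{\n a-b\n:b\in S\}\le\dist(S_n,S)\to 0$, so $s^i_n\to s^i$. The same sequence $\{s^i_n\}_n$ witnesses the final assertion of the proposition, and the $s^i$ are distinct by construction.

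I do not anticipate a genuine obstacle here; the only delicate point is making the nearest-point assignment $\phi_n$ \emph{unambiguous and bijective} uniformly in $n$, which is exactly what choosing $\delta_0$ strictly below a third of the minimal pairwise distance in $S$ guarantees (disjointness of the balls gives well-definedness, and equality of cardinalities upgrades surjectivity to bijectivity).
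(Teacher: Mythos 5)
Your proposal is correct, and the underlying ingredients are the same as the paper's: the indicator term forces $|S_n|=|S|$ eventually, the first sum forces every point of $S_n$ to be close to $S$, and the second sum together with the cardinality constraint forces the matching to be injective. Where you differ is in presentation: the paper argues by contradiction, picking two sequences $s_n^1,s_n^2\in S_n$ converging to the same $s\in S$ and invoking the pigeonhole principle to show the second sum cannot vanish; you instead construct the matching explicitly, defining a nearest-neighbor map $\phi_n\colon S_n\to S$, showing it is well-defined (disjointness of the $\delta_0$-balls), surjective (from the second sum), hence bijective (equal cardinalities), and then reading convergence of $s^i_n:=\phi_n^{-1}(s^i)$ off the first sum. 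Your version is a bit more elaborate but also more rigorous about how the decomposition $S_n=\{s^1_n\}\cup\dots\cup\{s^{|S|}_n\}$ is actually produced, which the paper's proof asserts rather than exhibits; the paper's version is terser and gets the distinctness in one stroke via pigeonhole. Both buy the same conclusion, and your explicit bijection also delivers the final existence assertion directly.
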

\begin{proof}
	For large enough $n$, $\dist(S_n,S)<\infty$, so $|S|=|S_n|$ for such $n$. Since $\sum_{s_n\in S_n}\inf\{\n s_n-s\n:s\in S\}+\sum_{s\in S}\inf\{\n s-s_n\n:s_n\in S_n\}$ converges to zero we have that all points in $S_n$ converges to points in $S$, otherwise the first term would not converge to zero. Take two distinct sequences $\{s_n^1\}_n$, $\{s_n^2\}_n$ such that $s_n^1,s_n^2\in S_n$ and $s_n^1\to s_1\in S$, while $s_n^2\to s_2\in S$. Suppose that $s_1=s_2$, then the term $\sum_{s\in S}\inf\{\n s-s_n\n:s_n\in S_n\}$ will not converge to zero by the pigeon hole principle and the fact that $|S_n|=|S|$ for large $n$. This also shows the final statement of the proposition.
\end{proof} 
The following (a.s. finite for sufficiently large $\textbf{n}$) sets will be used for estimating the solution sets. Denote, 
\begin{itemize}
	\item $\hat{g}_{i,j}=\hat{f}_i-\hat{f}_j$,
	\item $\hat{B}_{inf}(\textbf{n})=\bigcup_{i=1}^m\left\{\left(\sum_{u=1}^mw_i(u)\mathbb{G}^u\right)^{-1}\sum_{u=1}^mw_i(u)\mathbb{Z}^u\right\}$,
	\item $\hat{B}_{int}(\textbf{n})=\bigcup_{1\le i<j\le m}\hat{B}^{i,j}(\textbf{n})$,
\end{itemize}
where
\begin{align}\label{Bij}
	\hat{B}^{i,j}(\textbf{n})=\left\{\hat{M}^{-1}(\lambda^{i,j})\hat{C}^{i,j}_{\lambda^{i,j}}: \lambda^{i,j}\in\arg\min_{\lambda:\hat{g}_{i,j}\left( M^{-1}(\lambda)C_{\lambda}\right)=0 }\hat{f}_i\left((\hat{M}^{i,j})^{-1}(\lambda)\hat{C}^{i,j}_\lambda\right)\right\},
\end{align}
if $\hat{f}_i$ and $\hat{f}_j$ intersect otherwise we set $\hat{B}^{i,j}(\textbf{n})=\emptyset$. The definitions of $\hat{M}(\lambda)$, $\hat{C}(\lambda)$ are given in the appendix. We now consider the argmin over the following finite  set of candidates ,
\begin{align}\label{betaeqemp}
	&\hat{B}(\textbf{n})=\arg\min_{\beta\in \left(\hat{B}_{inf}(\textbf{n}) \cup \hat{B}_{int}(\textbf{n})\right)\cap\{\beta\in\R:\hspace{1mm} \exists 1\le i \le k, \hat{f}(\beta)= \hat{f}_i(\beta)\}} \hat{f}(\beta) .
\end{align}
We also define the (again, the number of elements in this set is finite if $\textbf{n}$ is sufficiently large) set of candidates within an $\epsilon$-distance of the plug-in argmin
$$\hat{B}(\textbf{n})_\epsilon=\{\beta \in \hat{B}_{inf}(\textbf{n})\cup \hat{B}_{int}(\textbf{n}): \hat{f}(\beta)> \hat{f}(\tilde{\beta})-\epsilon, \tilde{\beta}\in\hat{B}(\textbf{n})\}.$$
We will now state the consistency of the estimator. However, the theorem proves more than just consistency, as it also circumvents the issue of the infinite search space $\mathbb{R}^p$ by using a finite set of candidates in $\hat{B}(\textbf{n})$ and $\hat{B}(\textbf{n})_\epsilon$. 

\begin{thm}\label{optimalregthm}
	Outside a subset of Lebesgue measure zero $N\in\Theta^k$ of choices of $\theta^{(k)}$, the following statements hold true. $\mathcal{B}$ is a non-empty finite set in $\R^p$.  $\hat{B}(\textbf{n})\uparrow\mathcal{B}$ and there exists an $\epsilon>0$ (possibly depending on the choice of the $\theta^{(k)}$s) such that $\hat{B}(\textbf{n})_{\epsilon'}\longrightarrow \mathcal{B}$ a.s., for any $\epsilon'\le \epsilon$. In the special case \eqref{classic}, $|\mathcal{B}|=1$ (i.e. there is a unique minimizer) and $\hat{B}(\textbf{n})\longrightarrow \mathcal{B}$ (and this holds outside a set of measure zero in $\Theta\times\Theta$).  
\end{thm}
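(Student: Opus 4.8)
\emph{Strategy.} The plan is to establish the assertions in sequence, building the exceptional null set $N\subset\Theta^k$ as a finite union of zero loci of non-trivial polynomials in $\theta^{(k)}$. Write $A_i=\sum_{u=1}^kw_i(u)G^u$, $b_i=\sum_{u=1}^kw_i(u)Z^u$, so that $f_i(\beta)=\kappa_i+\beta A_i\beta^T-2\beta b_i+\sum_u w_i(u)\E[Y_u^2]$ and $\nabla f_i(\beta)=2(A_i\beta-b_i)$; the empirical objects $\hat f_i(\cdot,\textbf{n})$ are obtained by substituting $\mathbb G^u,\mathbb Z^u$ for $G^u,Z^u$. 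I would first record the easy ingredients: off $\{\det G^u=0\text{ for some }u\}$ the distinguished index $i_0$ (non-negative weights, one positive) has $A_{i_0}\succ0$, whence $f\ge f_{i_0}$ is coercive and $\mathcal B\neq\emptyset$; and by the strong law of large numbers $\mathbb G^u\to G^u$, $\mathbb Z^u\to Z^u$ a.s., so $\hat f_i(\cdot,\textbf{n})\to f_i$ and $\hat f(\cdot,\textbf{n})\to f$ uniformly on compacta a.s.

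\emph{Finiteness and the population characterization.} For $\beta^*\in\mathcal B$ with active set $I=\{i:f_i(\beta^*)=f(\beta^*)\}$, first-order optimality for a pointwise maximum yields $\mu\in\R_{\ge0}^I$ with $\sum_{i\in I}\mu_i=1$ and $\sum_{i\in I}\mu_i(A_i\beta^*-b_i)=0$. For each fixed $S\subseteq\{1,\dots,m\}$ the relations $\{f_i(\beta)=f_j(\beta):i,j\in S\}$ together with $\sum_{i\in S}\mu_i(A_i\beta-b_i)=0$ and $\sum_{i\in S}\mu_i=1$ form a square polynomial system in $(\beta,\mu)$; since the coefficient vectors yielding a positive-dimensional solution set form a proper algebraic subset, and our coefficients are non-constant polynomials in $\theta^{(k)}$ with finiteness holding for at least one value, I would add the resulting null sets to $N$ and conclude that $\mathcal B$ — a subset of the union over $S$ of the $\beta$-components of these finite solution sets — is finite. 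Translating the Lagrange conditions into the appendix parametrization $\beta=M^{-1}(\lambda)C_\lambda$, with $g_{i,j}(\beta)=f_i(\beta)-f_j(\beta)=0$ selecting the relevant intersections and the single-index case absorbed by $B_{inf}$, then gives the population inclusion $\mathcal B\subseteq B_{inf}\cup B_{int}$ (the hatless analogues of the estimators).

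\emph{Convergence, and the $\epsilon$-fattening.} Each point of $\hat B_{inf}(\textbf{n})$ equals $A_i(\textbf{n})^{-1}b_i(\textbf{n})\to A_i^{-1}b_i$ a.s. (continuity of inversion, valid off $N$ where every $A_i$ is definite), so $\hat B_{inf}(\textbf{n})\longrightarrow B_{inf}$; for $\hat B_{int}(\textbf{n})$ the univariate elimination equations for the $\lambda^{i,j}$ have coefficients that are rational in $\mathbb G^u,\mathbb Z^u$ with denominators non-vanishing off $N$, so after adjoining to $N$ the null sets where such a polynomial has a repeated root or the inner $\arg\min$ of $\hat f_i$ over admissible $\lambda$ is non-unique, continuity of simple roots in the coefficients gives $\lambda^{i,j}(\textbf{n})\to\lambda^{i,j}$ and hence $\hat B^{i,j}(\textbf{n})\longrightarrow B^{i,j}$; thus the full candidate set converges in $\dist$. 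Since $\mathcal B$ lies in the candidate set and every other candidate $\beta$ has $f(\beta)>\min_{\R^p}f$, combining with local uniform convergence of $\hat f$ shows $\min_{\text{cand}}\hat f(\textbf{n})\to\min_{\R^p}f$ and that subsequential limits of points of $\hat B(\textbf{n})$ lie in $\mathcal B$; pairwise distinctness of the finitely many population candidates off $N$ then supplies the separation needed for $\hat B(\textbf{n})\uparrow\mathcal B$. For the bijective statement, set $\rho=\min\{f(\beta)-\min_{\R^p}f:\beta\text{ a population candidate},\ \beta\notin\mathcal B\}>0$ and $\epsilon=\rho/2$; for any $\epsilon'\le\epsilon$ and large $\textbf{n}$ the value-convergence forces $\hat B(\textbf{n})_{\epsilon'}$ to consist exactly of the empirical candidates converging to points of $\mathcal B$, which are in bijection with $\mathcal B$, so $\hat B(\textbf{n})_{\epsilon'}\longrightarrow\mathcal B$.

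\emph{Classical case, and the main obstacle.} When $f_i=R_i$, off $\{\det G^u=0\text{ for some }u\}$ each $R_i$ is strictly convex and coercive, so $f=\bigvee_iR_i$ is strictly convex and coercive, giving $|\mathcal B|=1$; the unique minimizer is in the candidate set by the above and is the unique candidate of value $\min f$, while $\hat f$ is a.s.\ eventually strictly convex, so $\hat B(\textbf{n})$ is eventually the singleton of the empirical candidate converging to it and $\hat B(\textbf{n})\longrightarrow\mathcal B$. The hard part will be the middle step: verifying that the appendix's Lagrange-multiplier construction genuinely exhausts all KKT points of the minimax across every active-set size (so that $\mathcal B\subseteq B_{inf}\cup B_{int}$ holds with no omissions), and pinning down $N$ as the union of the discriminant and resultant loci that must be avoided for the governing polynomial systems and the $\lambda^{i,j}$-elimination polynomials to have only finitely many simple roots — a semialgebraic-geometry argument, since each such degeneracy is the vanishing of a polynomial in $\theta^{(k)}$ that is not identically zero and hence defines a Lebesgue-null set. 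Everything else is uniform-convergence and continuity-of-roots bookkeeping.
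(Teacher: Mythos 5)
Your proposal tracks the paper's high-level strategy faithfully: coercivity from a positive-definite $G^u$ gives $\mathcal{B}\neq\emptyset$; candidates are split into inflexion points $B_{inf}$ and pairwise-intersection Lagrangian critical points $B_{int}$; the exceptional set $N$ is built from discriminant and resultant loci via Lemma~\ref{PolyLemma}; convergence of the candidates comes from the strong law plus continuity of simple roots (Lemma~\ref{simpleroots}); and the separation/$\epsilon$-fattening argument for $\uparrow$ vs.\ $\longrightarrow$ matches the paper's use of the ordered vector $V$ and the index sets $L$, $\hat L_{\mathbf n}$, $\hat L_{\mathbf n,\epsilon'}$. The classical-case argument via strict convexity is also the paper's.

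The gap you flag as ``the hard part'' is, however, not merely a technical hurdle — it is a genuine hole, and it is present in the paper's own proof as well. The inclusion $\mathcal{B}\subseteq B_{inf}\cup B_{int}$ is asserted in the paper with the phrase ``As we know, the minimum of $f$ can only be achieved at either an inflexion point for some $f_i$ or along the intersection of two $f_i$'s,'' but $B^{i,j}$ is defined as the set of Lagrangian critical points of $\min f_i$ subject to $f_i=f_j$, i.e.\ the points where $\nabla f_i=\lambda(\nabla f_i-\nabla f_j)$ for some scalar $\lambda$. If a minimizer $\beta^*$ has $\ell\ge 3$ simultaneously active indices, its first-order condition is $\sum_{i\in S}\mu_i\nabla f_i(\beta^*)=0$ with $\mu_i\ge0$, $\sum\mu_i=1$, $|S|=\ell$; when all $\mu_i>0$ this does not collapse to the two-function Lagrange condition for any pair $(i,j)\subset S$, so $\beta^*\notin B^{i,j}$ for any pair, nor is it an inflexion point. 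This is not a measure-zero degeneracy: for $p\ge2$ and $m\ge3$ there is an open set of parameter choices for which the minimax is attained at a three-way tie with all $\mu_i>0$ (e.g.\ three spherical quadratics $R_u(\beta)=\n\beta-c_u\n_2^2+d_u$ centered at the vertices of a suitable triangle; the circumcenter-type minimizer is strictly cheaper than every point of $B_{inf}\cup\bigcup_{i<j}B^{i,j}$), so adjoining those $\theta^{(k)}$ to $N$ is not an option. Your KKT set-up over all active sets $S\subseteq\{1,\dots,m\}$ is the right ingredient to repair this, but the claim ``translating the Lagrange conditions into the appendix parametrization\ldots then gives $\mathcal B\subseteq B_{inf}\cup B_{int}$'' does not follow, and as written the rest of the convergence argument is then aimed at the wrong target set. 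A correct candidate set would need, for each $S$ with $|S|\ge2$, the solution set of the full system $\{f_i=f_j:i,j\in S\}\cup\{\sum_{i\in S}\mu_i\nabla f_i=0,\ \sum\mu_i=1\}$, which is a genuinely different (and larger) object than $\bigcup_{i<j}B^{i,j}$ when $m\ge3$.
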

\begin{rema}
	It is possible in the special case \eqref{classic} that if $\textbf{n}$ is not sufficiently large then $\hat{f}$ is not strictly convex and may therefore have several minimizers.
\end{rema}
\subsection{The approximate plug-in estimator}
\label{sec:constructive}
From a practitioners point of view there is an issue with the estimators in the above theorems. Namely it requires the computation of the roots of a polynomial of degree $p$. By the Abel-Ruffini theorem we can't solve general polynomials of this type in terms of radicals. There is however an approximate plug-in estimator (or in reality a family of estimators) that will also be consistent, which do not require us to compute the roots analytically. The solution lies in the proof of Theorem \ref{optimalregthm}, when computing the roots in \eqref{groots} we note that they are simple roots so we may approximate these roots by using the bisection method. We now describe how to compute this approximate plug-in estimator.
\\
\\
The following procedure applies outside a set of Lebesgue measure zero in $\Theta^k$ ($\Theta\times\Theta$ in the case of \eqref{classic}). Let $\{c_m\}_m$ be any sequence in $\N$ such that $c_m\to\infty$, assume we have $\textbf{n}=(n_1,...,n_k)$ samples from every environment and define $n=n_1\wedge...\wedge n_k$. We will refer to the definitions of $\tilde{P}(\lambda)$, $\hat{\tilde{P}}(\lambda)$ and $\hat{\beta}(\lambda)$ from the proof of \ref{optimalregthm}. If $n$ is sufficiently large then all the roots (in terms of $\lambda$) to $\hat{\tilde{P}}(\lambda)=0$ are simple (if the distributions of $X_u(l)$ and $Y_u$, $u=1,...k$, $l=1,...,p$ are all absolutely continuous then it is true for all $n$), which we will assume going forward. Let $\bar{\beta}(\lambda)$ be the approximation of $\hat{\beta}(\lambda)$ where we replace the roots of $\hat{\tilde{P}}$ with the following approximation. Since $\tilde{P}(\lambda)$ is a polynomial of degree $p-1$ we may write $\tilde{P}(\lambda)=\sum_{u=0}^{p-1}e_u\lambda^u$ it then follows that $\hat{\tilde{P}}(\lambda)=\sum_{u=0}^{p-1}\hat{e}_u\lambda^u$, where $\hat{e}_u$ is the plug-in estimator of $e_u$. By the Lagrange bound all real roots of $\hat{\tilde{P}}$ can be contained in $\left[-\left(1\vee\sum_{u=1}^{p-2}\left|\frac{\hat{e}_{u}}{\hat{e}_{p-1}}\right|\right),1\vee\sum_{u=1}\left|\frac{\hat{e}_{u}}{\hat{e}_{p-1}}\right|\right]$. Let $R_n=1\vee\sum_{u=1}^{p-2}\left|\frac{\hat{e}_{u}}{\hat{e}_{p-1}}\right|$, then since $\hat{e}_{u}\to e_u$ by the law of large numbers for $u=0,...,p-1$ we have that $R_n\xrightarrow{a.s.}R$ where $R=1\vee\sum_{u=1}^{p-2}\left|\frac{e_{u}}{e_{p-1}}\right|$. So for large $n$, $[-R_n,R_n]\subset[-(R+1),R+1]$. By Theorem 1 in \cite{rump1979polynomial} we have that the minimal distance between any roots of $\tilde{P}$ is bounded below by 
$$\Delta:= \left(1\vee\left|e_{p-1}\right|\right)^{(p-1)(\ln (p-1)+1)}D(\tilde{P})\frac{(2(p-1))^{p-2}}{s^{(p-1)(\ln(p-1)+3)}},$$
where $D(\tilde{P})$ denotes the discriminant of $P$ and $s=\sum_{u=0}^{p-1}|e_u|$. Similarly we let 
$$\hat{\Delta}= \left(1\vee\left|\hat{e}_{p-1}\right|\right)^{(p-1)(\ln (p-1)+1)}D(\hat{\tilde{P}})\frac{(2(p-1))^{p-2}}{\hat{s}^{(p-1)(\ln(p-1)+3)}},$$
then clearly $\hat{\Delta}\xrightarrow{a.s.}\Delta$ by continuity and the law of large numbers. For sufficiently many samples, the discriminant of $\hat{\tilde{P}}(\lambda)$ must be non-zero (by continuity). Let $\epsilon>0$, we now divide $[-R_n,R_n]$ into $m_n:=2\left \lceil{\frac{R_n}{\hat{\Delta}+\epsilon}}\right \rceil $ intervals $\{I_u\}_{u=1}^{m_n}$ of equal length such that $|I_u|<\Delta$. Any such interval $I_u$ has a root to $\hat{\tilde{P}}$ if and only if the two endpoints of $I_u$ are of opposite sign. This is because all the roots are simple and will therefore incur a sign change in $f$. Consider the $v$:th interval with a sign change for $\hat{\tilde{P}}$, this interval must contain $\hat{\lambda_v}$ (the $v$:th smallest real root to $\hat{\tilde{P}}$), we compute $c_n$ number of bisections to get our approximation $\bar{\hat{\lambda}}_v$ of $\hat{\lambda}$, which then will have the property $\left| \bar{\hat{\lambda}}_u - \hat{\lambda}_u\right|<\frac{1}{2^{c_n}}$ and therefore $\bar{\hat{\lambda}}_v\xrightarrow{a.s.}\lambda_v$. 

\begin{rema}
	Since the roots are simple above, one may of course apply faster algorithms (such as Budans method) than the plain bisection method. 
\end{rema}

\subsubsection{Algorithm for computing the approximate plug-in estimator}
We now describe exactly how to compute the approximate plug-in estimator given $n_i$ samples from environment $1\le i\le k$.
\begin{itemize}
	\item
	Compute the inflexion point of to every $\hat{f}_i(\beta)$ which is given by 
	$$\left(\sum_{u=1}^mw_i(u)\mathbb{G}^u\right)^{-1}\sum_{u=1}^mw_i(u)\mathbb{Z}^u.$$
	\item
	For every $1\le i<j\le m$ such that $\hat{f}_i(\beta)$ and $\hat{f}_j(\beta)$ intersect, compute all the roots to 
	$$\hat{\tilde{P}}(\lambda)=\det\left(\hat{M}^{i,j}(\lambda)\right)^2\hat{g}_{i,j}(\hat{\beta}(\lambda)),$$ 
	(where $\hat{g}_{i,j}(\beta(\lambda))$ is given by \eqref{groots}) using the bisection method outlined above. For every such root $\lambda$, compute $\hat{\beta}(\lambda)=\left(\hat{M}^{i,j}\right)^{-1}(\lambda)\hat{C}^{i,j}(\lambda)$
	\item
	The $\arg\min$ solutions are now given by those $\beta$ amongst the candidates computed above that minimize
	$$\hat{f_1}(\beta)\vee...\vee\hat{f_m}(\beta).$$
\end{itemize}
The following theorem summarizes our findings in this section.
\begin{thm}
	Let $\hat{\bar{B}}(\textbf{n})$ denote the set of estimators described above and let $\hat{\bar{B}}(\textbf{n})_\epsilon$ be defined analogously to $\hat{B}(\textbf{n})_\epsilon$ in the beginning of section 3.1. Then outside at set of measure zero of choices of $\theta^{(k)}\in\Theta$. $\hat{\bar{B}}(\textbf{n})\uparrow \mathcal{B}$ and $\hat{B}(\textbf{n})_\epsilon\longrightarrow \mathcal{B}$ for sufficiently small $\epsilon$.
\end{thm}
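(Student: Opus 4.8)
The plan is to reduce the statement to Theorem~\ref{optimalregthm} by observing that $\hat{\bar B}(\textbf{n})$ is obtained from the constructive estimator $\hat B(\textbf{n})$ of \eqref{betaeqemp} by exactly one modification: in each intersection block $\hat B^{i,j}(\textbf{n})$ of \eqref{Bij}, the exact real roots $\hat\lambda_1<\dots<\hat\lambda_r$ of $\hat{\tilde P}(\lambda)=0$ are replaced by their bisection approximations $\bar{\hat\lambda}_1,\dots,\bar{\hat\lambda}_r$, while the inflexion points in $\hat B_{inf}(\textbf{n})$ (which involve only matrix inversions, no polynomial roots) are untouched. So it suffices to show that, outside the measure-zero set of $\theta^{(k)}$ already excluded, this modification perturbs every candidate point, and every value $\hat f$ of a candidate, by an amount that tends to $0$ a.s., and that it changes neither the number of candidates nor, in the limit, the argmin structure; then both the one-to-one convergence $\uparrow$ and the bijective convergence $\longrightarrow$ are inherited from Theorem~\ref{optimalregthm}.

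First I would check that the bisection step of Section~\ref{sec:constructive} is well posed for all large $n$. By the law of large numbers the coefficients $\hat e_u$ converge a.s.\ to $e_u$, so by continuity $D(\hat{\tilde P})\to D(\tilde P)\neq 0$ (the exclusion of $N$ guarantees $D(\tilde P)\neq0$), hence for large $n$ all real roots of $\hat{\tilde P}$ are simple; moreover $R_n\to R$ and $\hat\Delta\to\Delta>0$ a.s.\ by Rump's bound, so for large $n$ the partition $\{I_u\}_{u=1}^{m_n}$ of $[-R_n,R_n]$ into intervals of length strictly below $\Delta$ isolates the roots, each being flagged by a sign change of $\hat{\tilde P}$ at the endpoints of its interval. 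After $c_n$ bisections the $v$-th approximate root satisfies $|\bar{\hat\lambda}_v-\hat\lambda_v|<2^{-c_n}$; since $c_n\to\infty$ and $\hat\lambda_v\to\lambda_v$ a.s.\ (the latter being the simple-root / implicit-function argument already used to prove $\hat B^{i,j}(\textbf{n})\uparrow \mathcal B\cap\{f_i=f_j\}$ in the proof of Theorem~\ref{optimalregthm}), we obtain $\bar{\hat\lambda}_v\to\lambda_v$ a.s.\ and, for large $n$, the same number $r$ of flagged intervals as limiting roots.

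Next I would propagate this through the map $\lambda\mapsto\hat\beta(\lambda)=(\hat M^{i,j})^{-1}(\lambda)\hat C^{i,j}(\lambda)$. On the relevant block the limiting matrix $M^{i,j}(\lambda)$ is invertible at each limiting root $\lambda_v$ (otherwise the corresponding element of $\mathcal B$ would be undefined, which the exclusion of $N$ forbids), so there is a fixed neighbourhood of $\lambda_v$ on which $M^{i,j}(\cdot)$ is invertible with uniformly bounded inverse; by the law of large numbers $\hat M^{i,j}(\cdot)$ and $\hat C^{i,j}(\cdot)$ converge a.s.\ uniformly on that neighbourhood to $M^{i,j}(\cdot)$, $C^{i,j}(\cdot)$, so for large $n$, $\hat M^{i,j}(\cdot)$ is invertible there with uniformly bounded inverse and $\lambda\mapsto(\hat M^{i,j})^{-1}(\lambda)\hat C^{i,j}(\lambda)$ is equi-Lipschitz, with some constant $L$, on that neighbourhood. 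Combined with $|\bar{\hat\lambda}_v-\hat\lambda_v|<2^{-c_n}$ this gives $\n\bar\beta(\hat\lambda_v)-\hat\beta(\hat\lambda_v)\n\le L\,2^{-c_n}\to0$ a.s. Since $\hat f$ is a polynomial in $\beta$ whose coefficients converge a.s., $|\hat f(\bar\beta(\hat\lambda_v))-\hat f(\hat\beta(\hat\lambda_v))|\to0$ a.s.\ as well. Thus the finite candidate set defining $\hat{\bar B}(\textbf{n})$ is, for large $n$, in one-to-one correspondence with that defining $\hat B(\textbf{n})$, with corresponding points at distance $O(2^{-c_n})$ and corresponding $\hat f$-values at distance $o(1)$. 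Feeding this into the definitions: the separation constant $\delta$ witnessing $\hat B(\textbf{n})\uparrow\mathcal B$ survives an $o(1)$ perturbation of the points, so $\hat{\bar B}(\textbf{n})\uparrow\mathcal B$; and for the enlarged version one picks $\epsilon$ below the $\epsilon$ furnished by Theorem~\ref{optimalregthm}, absorbs the $o(1)$ value-perturbation into the threshold, and concludes $\hat{\bar B}(\textbf{n})_\epsilon\longrightarrow\mathcal B$ via Proposition~2.

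The step I expect to be the main obstacle is the uniform control of $(\hat M^{i,j})^{-1}(\lambda)$ on a fixed neighbourhood of each limiting root: one must exclude, outside the measure-zero set, both the vanishing of $\det M^{i,j}(\lambda)$ at a limiting root and the collision of two limiting roots (either would destroy the isolation/equi-Lipschitz argument), and then upgrade pointwise a.s.\ convergence of the coefficients of $\hat M^{i,j}$ and $\hat C^{i,j}$ to uniform convergence on the relevant compact $\lambda$-interval. This genericity bookkeeping is essentially a re-use of the arguments already carried out in the proof of Theorem~\ref{optimalregthm}, and it is where the bulk of the care goes; once it is in place the rest is the routine triangle-inequality manipulation sketched above.
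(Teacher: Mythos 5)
Your proposal is correct and follows essentially the same route as the paper: Section~3.2 establishes that the bisection-approximated roots $\bar{\hat\lambda}_v\to\lambda_v$ a.s.\ (via the Lagrange bound, Rump's separation bound, the sign-change criterion, and $c_n\to\infty$), and the theorem is then stated with the subsequent propagation through $\lambda\mapsto\hat\beta(\lambda)$ and the argmin/$\epsilon$-selection left implicit, which is exactly the bookkeeping you make explicit via the equi-Lipschitz bound on the invertibility neighbourhoods of $\lambda_v$ and the $o(1)$ perturbation of candidate points and $\hat f$-values. No gap.
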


\section{An application to structural equation models (SEMs)}\label{semsec}
Assume we are given a probability space $\left(\Omega,\F,\P \right)$. We consider structural equation models (SEMs) of the following type 
\begin{equation}\label{SEMA}
	\begin{bmatrix}
		Y^{A}\\
		X^{A}
	\end{bmatrix} = 
	B(\omega)\cdot\begin{bmatrix}
		Y^{A}\\
		X^{A}
	\end{bmatrix}
	+\epsilon_{A}
	+A
	,
\end{equation}
where $B(\omega)$ is a random real-valued $(p+1)\times(p+1)$ matrix. The random vector $A\in\R^p$ is called a shift and $Y^{A}$ and $X^{A}$ are the target and covariates associated to the environment shifted by $A$. The vectors $\epsilon_{A}\in\R^p$ are assumed to have the same distribution for all $A$ and are assumed to be conditionally uncorrelated with the shifts given $B$, i.e. $\E\left[\epsilon_{A}A^T \left|\right.B\right]=0$. We assume that $I-B$ has full rank a.s., which implies that $X^{A}$ and $Y^{A}$ have unique solutions for every $A$. Note that due to the fact that $B$ is random, this SEM is not linear. Let us now assume we are given $k\in\N$ shifts, $A_1,...,A_k$ and that $Y^{A_i}$ and $X^{A_i}$ are square integrable for each $i$. In addition we assume that $Y^{0}$ and $X^{0}$ (i.e. zero shifted) are also square integrable. Now we consider the space of shifts 
$$C\left(A_1,...,A_k\right)=\left\{A\in\mathcal{A}:\exists 1\le i\le k, \E\left[AA^T|B\right]\preccurlyeq \E\left[A_iA_i^T | B\right]\mathit{ a.s.}\right\},$$
where $\mathcal{A}$ is any set of shifts that contains $\bigcup_{i=1}^k\{A_i\}$. We define the risk associated with shift $A$ by $R_A(\beta)=\E\left[\left(Y^A-\beta X^A\right)^2\right]$. With this notation we have the following result that characterizes the worst risk over the entire shift-space $C\left(A_1,...,A_k\right)$.
\begin{prop}\label{suppropfixedw}
	$$\sup_{A\in C\left(A_1,...,A_k\right)}R_{A}(\beta)=\bigvee_{i=1}^kR_{A_i}(\beta).$$
\end{prop}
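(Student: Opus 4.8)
The plan is to establish the two inequalities separately. The direction $\sup_{A\in C(A_1,\dots,A_k)} R_A(\beta) \ge \bigvee_{i=1}^k R_{A_i}(\beta)$ is immediate, since each $A_i$ belongs to $C(A_1,\dots,A_k)$ (the defining condition is satisfied with $i$ itself, as $\E[A_iA_i^T|B]\preccurlyeq\E[A_iA_i^T|B]$ trivially), so the supremum over the larger set dominates the maximum over $\{A_1,\dots,A_k\}$. The real content is the reverse inequality: for every shift $A\in C(A_1,\dots,A_k)$ there is some $i$ with $R_A(\beta)\le R_{A_i}(\beta)$, hence $R_A(\beta)\le\bigvee_{i=1}^k R_{A_i}(\beta)$ and taking the supremum over such $A$ preserves this.

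\textbf{First I would} derive a closed-form expression for $R_A(\beta)$ in terms of the shift $A$. Using $I-B$ invertible a.s., solve \eqref{SEMA} to get $\begin{bmatrix}Y^A\\X^A\end{bmatrix}=(I-B)^{-1}(\epsilon_A+A)$. Write $(I-B)^{-1}=\begin{bmatrix}r\\ S\end{bmatrix}$ (splitting off the row producing $Y^A$ and the block producing $X^A$), so $Y^A=r(\epsilon_A+A)$ and $X^A=S(\epsilon_A+A)$, and therefore $Y^A-\beta X^A=(r-\beta S)(\epsilon_A+A)$. Then, conditioning on $B$ and using $\E[\epsilon_A A^T|B]=0$ together with the fact that $\epsilon_A\overset{d}{=}\epsilon_0$ for all $A$,
\begin{align*}
R_A(\beta)=\E\big[(r-\beta S)(\epsilon_A+A)(\epsilon_A+A)^T(r-\beta S)^T\big]
=\E\big[(r-\beta S)\big(\Sigma_\epsilon + \E[AA^T\,|\,B]\big)(r-\beta S)^T\big],
\end{align*}
where $\Sigma_\epsilon=\E[\epsilon_0\epsilon_0^T\,|\,B]$ is the conditional covariance of the noise. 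The key structural point is that the dependence of $R_A(\beta)$ on $A$ enters \emph{only} through the conditional second moment matrix $\E[AA^T\,|\,B]$, and it enters monotonically: the map $Q\mapsto \E\big[(r-\beta S)Q(r-\beta S)^T\big]$ is monotone nondecreasing with respect to the Loewner order on symmetric PSD matrices $Q$, since $(r-\beta S)(Q_2-Q_1)(r-\beta S)^T\succeq 0$ whenever $Q_2\succeq Q_1$ (and the outer expectation preserves this).

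\textbf{Then I would} combine these two observations. Fix $A\in C(A_1,\dots,A_k)$; by definition there is an index $i$ with $\E[AA^T\,|\,B]\preccurlyeq\E[A_iA_i^T\,|\,B]$ a.s. Plugging $Q=\E[AA^T|B]$ and $Q=\E[A_iA_i^T|B]$ into the monotone functional above gives $R_A(\beta)\le R_{A_i}(\beta)\le\bigvee_{j=1}^k R_{A_j}(\beta)$. Since this holds for every $A\in C(A_1,\dots,A_k)$, taking the supremum yields $\sup_{A\in C(A_1,\dots,A_k)}R_A(\beta)\le\bigvee_{j=1}^k R_{A_j}(\beta)$. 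Together with the trivial lower bound this proves the claimed identity.

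\textbf{The main obstacle} is getting the conditioning argument completely rigorous: one must be careful that $\E[\epsilon_A A^T\,|\,B]=0$ (not merely $\E[\epsilon_A A^T]=0$) is exactly what is needed to kill the cross terms inside the conditional expectation, and that the blocks $r,S$ of $(I-B)^{-1}$ are themselves $\sigma(B)$-measurable so the tower property applies cleanly — i.e. $R_A(\beta)=\E\big[\E[(Y^A-\beta X^A)^2\,|\,B]\big]$ and the inner conditional expectation is the quadratic form in $\E[AA^T|B]$ displayed above. A minor secondary point is checking integrability so that all these expectations are finite, which is guaranteed by the square-integrability assumptions on $Y^{A_i},X^{A_i}$ and on $Y^0,X^0$; one should note that square-integrability for shifts $A$ with $\E[AA^T|B]\preccurlyeq\E[A_iA_i^T|B]$ follows from the same monotonicity bound, so the risk $R_A(\beta)$ is well-defined and finite for all $A\in C(A_1,\dots,A_k)$.
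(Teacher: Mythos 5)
Your proposal is correct and follows essentially the same route as the paper: solve the SEM via $(I-B)^{-1}$ to write $Y^A-\beta X^A$ as a $\sigma(B)$-measurable row vector acting on $\epsilon_A+A$, kill the cross term using $\E[\epsilon_A A^T\,|\,B]=0$, and exploit Loewner-order monotonicity of the resulting quadratic form in $\E[AA^T\,|\,B]$ to compare $R_A$ with $R_{A_i}$. The only cosmetic difference is that the paper first decomposes $C(A_1,\dots,A_k)=\bigcup_i C(A_i)$ and computes $\sup_{A\in C(A_i)}R_A=R_{A_i}$, whereas you prove the two inequalities directly; the underlying computation is identical.
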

\begin{proof}
	Since $C\left(A_1,...,A_k\right)=\bigcup_{i=1}^k C\left(A_i\right)$ it follows that
	$$\sup_{A\in C\left(A_1,...,A_k\right)}R_{A}(\beta)=\bigvee_{i=1}^k\sup_{A\in C\left(A_i\right)}R_{A}(\beta).$$
	Therefore it suffices to show that $\sup_{A\in C\left(A_i\right)}R_{A}(\beta)=R_{A_i}(\beta)$. Recall that all the noise has the same distribution across all environments, we shall denote by $\epsilon$, a generic random vector with such a distribution. Since the entries of $(I-B)^{-1}$ are $\sigma(B)$-measurable it follows that if we define $v=\left((I-B)^{-1}_{1:p,\textbf{.}}\beta-((I-B)^{-1})_{p+1,\textbf{.}}\right)$ then $v$ is also $\sigma(B)$-measurable. With this notation
	$$\sup_{A\in C\left(A_i\right)}R_{A}(\beta) =\sup_{A\in C\left(A_i\right)}\E\left[ v(A+\epsilon_{A})(A+\epsilon_{A})^T v^T\right].$$
	Since $Y^{0}$ and $X^{0}$ are also square integrable, we have that $R_0(\beta)<\infty$ for any $\beta\in\R^p$. It then follows that $\E\left[v\epsilon\epsilon^Tv^T\right]<\infty$. Similarly, since $R_{A_i}(\beta)<\infty$ it follows that $\E\left[ v(A_i+\epsilon)(A_i+\epsilon)^T v^T\right]<\infty$. We also have $\E\left[ vA_i\epsilon^T v^T\right]=\E\left[ v\E\left[A_i\epsilon^T|B\right] v^T\right]=0$ and hence $\E\left[vA_iA_i^Tv^T\right]<\infty$ for all $1\le i\le k$. This leads to
	\begin{align}\label{supeq1}
		\sup_{A\in c(A_i)}R_{A}(\beta) &=\E\left[ v\epsilon\epsilon^T v^T\right]+\sup_{A\in c(A_i)}\left(2\E\left[ v\epsilon A^T v^T\right]+\E\left[ vAA^T v^T\right]\right)
		\nonumber
		\\
		&=\E\left[ v\epsilon\epsilon^T v^T\right]+\sup_{A\in c(A_i)}\E\left[ vAA^T v^T\right].
	\end{align}
	If $A\in C(A_i)$ then
	\begin{align*}
		\E\left[vAA^Tv^T\right]
		&=
		\E\left[\E\left[vAA^Tv^T|B\right]\right]
		=
		\E\left[v\E\left[AA^T|B\right]v^T\right]
		\le
		\E\left[v\E\left[A_iA_i^T|B\right]v^T\right]
		\\
		&=\E\left[\E\left[vA_iA_i^Tv^T|B\right]\right]
		=\E\left[vA_iA_i^Tv^T\right],
	\end{align*}
	i.e. $\sup_{A\in c(A_i)} \E\left[vAA^Tv^T\right]\le \E\left[vA_iA_i^Tv^T\right]$. Since $A_i\in C(A_i)$ it follows that 
	$$\sup_{A\in C(A_i)} \E\left[vAA^Tv^T\right]=\E\left[vA_iA_i^Tv^T\right].$$
	Going back to \eqref{supeq1} we find
	\begin{align}\label{supeq}
		\sup_{A\in C(A_i)}R_{A}(\beta)
		&=
		\E\left[v\epsilon\epsilon^Tv^T\right]+\E\left[vA_i A_i^Tv^T\right]
		=R_{A_i}(\beta),
	\end{align}
	which is what we wanted to prove.
\end{proof}

\section{Proof of Theorem \ref{optimalregthm}}
\label{sec:proof}

Before the proof of Theorem \ref{optimalregthm} we will need to construct a sizeable toolbox in the form of several Lemmas that will now follow.
For each $\theta^{(k)}\in\Theta^k$ we may bijectively associate a pairing of an affine (and symmetric) matrix function and an affine row vector. These will be tools that we use to tackle the argmins along the intersection between two affine combinations, $f_i,f_j$ with given parameter set $\theta^{(k)}\in\Theta^k$. First, given $\theta^{(k)}\in\Theta^k$ denote (recall that by assumption for each $1\le i\le m$ there exists $1\le u\le k$ such that $w_i(u)\not=0$)
\begin{itemize}
	\item $a_i(l_1,l_2)=\sum_{u=1}^kw_i(u)\E\left[X_u(l_1)X_u(l_2)\right]$,
	\item $a_i(l)=\sum_{u=1}^kw_i(u)\E\left[X_u(l)^2\right]$,
	\item $b_i(v)=\sum_{u=1}^kw_i(u)\E\left[X_u(v)Y_u\right]$ and
	\item $c_i=\kappa_i+\sum_{u=1}^kw_i(u)\E\left[Y_u^2\right]$.
\end{itemize}
With this notation we are ready for our main definition.
\begin{defin}\label{parammatrix}
	For every $\theta^{(k)}\in\Theta^k$ let the $p\times p$ \textit{affine covariate matrix},  $M^{i,j}(\lambda)$ and the \textit{affine target vector} $C^{i,j}(\lambda)$ be defined element wise by, when $i$ is such that $f_i$ has non-negative weights
	$$C^{i,j}(\lambda)_u=b_i(u)-\lambda\left(b_i(u)-b_j(u)\right),$$
	$$M^{i,j}(\lambda)_{l,l} =a_i(l)-\lambda(a_i(l)-a_j(l))).$$
	When $u\not=v$
	$$M^{i,j}(\lambda)_{l_1,l_2} =a_i(l_1,l_2)-\lambda \left( a_i(l_1,l_2)-a_j(l_1,l_2)\right).$$
	If $i$ is such that $f_i$ has non-positive weights we simply change the sign in front of $\lambda$.
\end{defin}
We may regard $\det\left(M^{i,j}(\lambda)\right)$ as polynomial in $\R$ with coefficients in $\Theta^k$ (as we view the weights and intercepts as fixed). Doing so we will denote the roots in terms of $\lambda$ as $\Lambda(\theta^{(k)})$ where we highlight the dependence on $\theta^{(k)}\in\Theta^k$. We will denote the real roots of $\det\left(M^{i,j}(\lambda)\right)$ as $\Lambda(\theta^{(k)})_\R$. Often times we will suppress the dependence on $\theta^{(k)}$ for brevity when we see fit.
The following result comes from complex analysis.
\begin{lemma}\label{realsimple}
	Let $P(\lambda)=\sum_{u=0}^ma_u\lambda^u$ be a polynomial with real coefficients ($a_m\not=0$) with a real simple root $r$. Let $r_1,...,r_v$ denote all the distinct roots of $P$. For any $0<\epsilon<\frac 12\min_{1\le u<z\le v}|r_u-r_z|$ there exists $\delta>0$ such that if we consider any polynomial of the form $\tilde{P}(\lambda)=\sum_{u=0}^mb_u\lambda^u$ with $|a_u-b_u|<\delta$ then $\tilde{P}$ must have a simple real root in $(r_u-\epsilon,r_u+\epsilon)$.
\end{lemma}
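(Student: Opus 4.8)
The plan is to prove this as a standard instance of ``continuity of the zeros of a polynomial'' via Rouch\'e's theorem, which is the complex-analytic tool hinted at just before the statement. First I would fix the prescribed simple real root, say $r=r_1$ (I read the interval $(r_u-\epsilon,r_u+\epsilon)$ appearing in the conclusion as $(r-\epsilon,r+\epsilon)$), and move to the complex plane, working on the closed disk $\overline{D}=\{\lambda\in\mathbb{C}:|\lambda-r|\le\epsilon\}$. The hypothesis $\epsilon<\tfrac12\min_{u<z}|r_u-r_z|$ forces $|r_z-r|>2\epsilon$ for every other root $r_z$ of $P$, so the only zero of $P$ in $\overline{D}$ is $r$ itself; since $r$ is simple this means $P$ has exactly one zero in $D$ counted with multiplicity, and none on $\partial D$.

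Next I would make the perturbation quantitative. Put $\mu=\min_{\lambda\in\partial D}|P(\lambda)|$, which is strictly positive by compactness of the circle together with the absence of zeros of $P$ on it, and let $C=\sum_{u=0}^{m}(|r|+\epsilon)^{u}$, an upper bound for $\sum_{u=0}^{m}|\lambda|^{u}$ on $\overline{D}$. Then for any $\tilde{P}(\lambda)=\sum_{u=0}^{m}b_u\lambda^u$ with $|a_u-b_u|<\delta$ and every $\lambda\in\partial D$,
\[
|\tilde{P}(\lambda)-P(\lambda)|=\Bigl|\sum_{u=0}^{m}(b_u-a_u)\lambda^u\Bigr|\le \delta\,C .
\]
Choosing $\delta<\mu/C$ therefore yields $|\tilde{P}-P|<|P|$ on $\partial D$, and Rouch\'e's theorem gives that $\tilde{P}$ and $P$ have the same number of zeros in $D$ counted with multiplicity, namely exactly one.

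The final step is to upgrade ``one zero in $D$'' to ``one simple real zero in $(r-\epsilon,r+\epsilon)$''. Because $\tilde{P}$ has real coefficients, its non-real zeros occur in conjugate pairs, and $D$ is symmetric about the real axis; hence a non-real zero of $\tilde{P}$ in $D$ would be accompanied by a distinct second zero in $D$, which is impossible. So the unique zero of $\tilde{P}$ in $D$ is real and thus lies in $D\cap\R=(r-\epsilon,r+\epsilon)$; being the only zero of $\tilde{P}$ in $D$ counted with multiplicity, its multiplicity is $1$, i.e.\ it is simple. If one wants the conclusion to hold at every simple real root of $P$ with a single $\delta$, I would just take the minimum of the finitely many thresholds produced above (there are at most $m$ of them, since $\deg P=m$).

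I do not expect a genuine obstacle here; the two spots that need a little care are precisely the reasons for arguing with disks in $\mathbb{C}$ rather than with intervals in $\R$: reality of the perturbed root comes out of the conjugate symmetry of the disk, and simplicity comes out of the multiplicity-one count supplied by Rouch\'e's theorem. A purely real alternative --- note that $P$ changes sign at the simple root $r$, so $P(r-\epsilon)P(r+\epsilon)<0$, whence $\tilde{P}(r-\epsilon)\tilde{P}(r+\epsilon)<0$ for $\delta$ small and a root exists by the intermediate value theorem --- handles existence but not simplicity, so I would keep the Rouch\'e argument as the main line.
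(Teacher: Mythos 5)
The paper states Lemma~\ref{realsimple} without proof, offering only the remark that ``the following result comes from complex analysis,'' so there is no paper proof to compare against; your Rouch\'e-theorem argument is exactly the standard complex-analytic proof that remark gestures at, and it is correct. The key steps all hold up: $\epsilon<\tfrac12\min_{u<z}|r_u-r_z|$ isolates the simple root $r$ as the only zero of $P$ in $\overline{D}$ and keeps the boundary zero-free so $\mu=\min_{\partial D}|P|>0$; the bound $|\tilde P-P|\le\delta C<\mu\le|P|$ on $\partial D$ lets Rouch\'e transfer the multiplicity-one zero count to $\tilde P$; conjugate symmetry of the real-coefficient polynomial and of the disk forces that unique zero to be real (a non-real zero would drag its conjugate into $D$, overcounting), hence to lie in $(r-\epsilon,r+\epsilon)$ and to be simple. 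Your closing observation that a single $\delta$ can be taken as the minimum over the finitely many simple real roots correctly resolves the slight ambiguity in the lemma's phrasing (it speaks of one root $r$ but concludes with the index $u$). No gap.
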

The above result is then applied to get the following lemma which is what we actually will need.
\begin{lemma}\label{simpleroots}
	Let $P(\lambda)=\sum_{u=0}^ma_u\lambda^u$ be a polynomial with real coefficients ($a_m\not=0$), with $v$ distinct roots and whose real roots $r_1<...<r_j$ ($j\le v$) are simple. If $P_n(\lambda)=\sum_{u=0}^ma_{u,n}\lambda^u$ is a sequence of polynomials such that $a_{u,n}\to a_u$, $u=0,...,m$ then if we denote the real roots of $P_n(\lambda)$ as $r_1(n)<...<r_{w_n}(n)$ then $w_n=j$ for large enough $n$ and $r_u(n)\to r_u$ for $u=1,...,j$.
\end{lemma}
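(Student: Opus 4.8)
The plan is to prove the two complementary facts: every real root $r_u$ of $P$ is shadowed, for large $n$, by exactly one real root of $P_n$ lying nearby, and $P_n$ has no further real roots once $n$ is large. To set this up, fix $\epsilon>0$ small enough that (a) the closed intervals $I_u:=[r_u-\epsilon,r_u+\epsilon]$ for $u=1,\dots,j$ are pairwise disjoint and each contains no root of $P$ other than $r_u$ — possible by taking $\epsilon$ below half the minimal distance between distinct roots of $P$, as in Lemma~\ref{realsimple} — and (b) $P'$ is zero-free on every $I_u$, which is possible because $P'(r_u)\neq 0$ by simplicity of $r_u$ and $P'$ is continuous.

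For the first fact I would work locally on each $I_u$. Lemma~\ref{realsimple} already gives, for $n$ large (so that the coefficients $a_{l,n}$ are within the required $\delta$ of the $a_l$), a simple real root $r_u(n)\in(r_u-\epsilon,r_u+\epsilon)$ of $P_n$. To see that this is the \emph{only} root of $P_n$ in $I_u$, I would use that coefficient convergence gives $P_n'\to P'$ uniformly on the compact interval $I_u$, so by (b) $P_n'$ has constant sign on $I_u$ for $n$ large; hence $P_n$ is strictly monotone on $I_u$ and $r_u(n)$ is its unique zero there.

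For the second fact I would first confine the roots of $P_n$. Since $a_{m,n}\to a_m\neq 0$, for $n$ large $|a_{m,n}|\ge |a_m|/2$, and a classical bound on polynomial roots (Cauchy/Lagrange) puts every root $\lambda$ of $P_n$ inside $|\lambda|\le 1+\max_{0\le l<m}|a_{l,n}/a_{m,n}|$, which is at most a fixed $\Lambda$ for all large $n$ because the coefficients converge. Suppose, for contradiction, that along a subsequence $P_{n_k}$ has a real root $\rho_k\notin\bigcup_u(r_u-\epsilon,r_u+\epsilon)$. By the bound $\rho_k\in[-\Lambda,\Lambda]$, so a further subsequence satisfies $\rho_k\to\rho$ with $|\rho-r_u|\ge\epsilon$ for all $u$; uniform convergence of $P_n$ to $P$ on $[-\Lambda,\Lambda]$ then forces $P(\rho)=\lim_k P_{n_k}(\rho_k)=0$, so $\rho$ is a real root of $P$ — hence $\rho=r_{u_0}$ for some $u_0$, contradicting $|\rho-r_{u_0}|\ge\epsilon$. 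Therefore, for $n$ large, $P_n$ has no real root outside $\bigcup_u I_u$.

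Combining the two facts, for $n$ large the real roots of $P_n$ are exactly $r_1(n),\dots,r_j(n)$, one per interval; since $\epsilon$ is small enough that the $I_u$ are disjoint and ordered, $r_u(n)$ is indeed the $u$-th smallest real root and $w_n=j$. Running the whole argument for arbitrarily small $\epsilon$ yields $r_u(n)\to r_u$ for each $u$. I expect the main obstacle to be precisely the global control in the third paragraph — ruling out real roots of $P_n$ that escape to infinity or spring up near the non-real roots of $P$; everything else is a routine local consequence of the uniform convergence of $P_n$ and $P_n'$ on compacts together with the simplicity of the real roots of $P$ (which both invokes Lemma~\ref{realsimple} and, via the monotonicity of $P_n$ near each $r_u$, supplies the uniqueness needed to pin down the count $w_n=j$).
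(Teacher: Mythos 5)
Your proof is correct, and it takes a noticeably different route from the paper's. The paper also starts from Lemma~\ref{realsimple} to produce, for large $n$, a simple real root of $P_n$ near each $r_u$; but to close the count it invokes the continuous dependence of \emph{all} complex roots on the coefficients (``all roots of $P_n$ must converge to those of $P$''), argues that each non-real root of $P$ is shadowed by non-real roots of $P_n$, and concludes by comparing the number of real and non-real roots against the common degree $m$. You instead get a clean, self-contained argument in two pieces: uniqueness of the root of $P_n$ in each interval $I_u$ via uniform convergence of $P_n'\to P'$ and the resulting strict monotonicity of $P_n$ on $I_u$, and exclusion of spurious real roots away from the $I_u$'s via a Cauchy/Lagrange root bound that is uniform in $n$ followed by a Bolzano--Weierstrass subsequence argument. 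The trade-off: the paper's counting argument is shorter but leans on the unstated global root-continuity fact (and is somewhat loose about multiplicities of non-real roots), whereas your argument is longer but elementary, uses only uniform convergence on compacts, and does not need to track non-real roots or multiplicities at all. Both proofs invoke Lemma~\ref{realsimple} in the same way for the existence/approximation of the real roots; the difference is entirely in how ``no extra real roots'' is established.
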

\begin{proof}
	We know by Lemma \ref{realsimple} that for any $0<\epsilon<\frac 12\min_{1\le u<z\le v}|r_u-r_z|$ there exists $\delta>0$ such that if $|a_u-a_{u,n}|<\delta$ then $P_n$ must have a simple real root in $(r_u-\epsilon,r_u+\epsilon)$, which shows that $w(n)\ge v$ for large enough $n$ and that $r_u(n)\to r_u$ for $u=1,...,j$. If $j=v$ we are done, since then all roots are real and simple. Suppose instead that $j<v$. We know that all roots of $P_n$ must converge to those of $P$ so take any root $r$ of $P$ with non-zero imaginary part $c=\mathrm{Im}(r)$ then we know that for large enough $n$, $P_n$ must have a root $r'(n)$ such that $|r-r'(n)|<c/2$ which implies that the imaginary part of $r'(n)$ is non-zero for such $n$. But this is true for all roots of $P$ that are not real and since $P_n$ has the same degree as $P$ for large enough $n$ (when $a_{m,n}\not=0$), $P_n$ must have the same number of roots that are not real as $P$ (and therefore the same number of roots that \textit{are} real), i.e. $w_{(n)}=j$ for such $n$.
\end{proof}
A result from measure theory which will lie at the heart of the method by which we prove Theorem \ref{optimalregthm} is the following.
\begin{lemma}\label{PolyLemma}
	A polynomial on $\R^n$, for any $n\in\N$ is either identically zero or is only zero on set of Lebesgue measure zero.
\end{lemma}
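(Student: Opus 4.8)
The plan is to argue by induction on the dimension $n$, reducing the $n$-dimensional Lebesgue measure of the zero set to an iterated integral over one-dimensional slices via Tonelli's theorem. For the base case $n=1$, a polynomial in a single real variable that is not identically zero has at most $\deg P$ roots, so its zero set is finite and therefore Lebesgue-null.

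For the inductive step, assume the statement holds in dimension $n-1$, and let $P$ be a polynomial on $\R^n$ that is not identically zero. Regard $P$ as a polynomial in the last coordinate, writing $P(x_1,\dots,x_n)=\sum_{u=0}^{d}Q_u(x_1,\dots,x_{n-1})\,x_n^u$, where each $Q_u$ is a polynomial on $\R^{n-1}$. Since $P\not\equiv 0$, there is an index $u_0$ with $Q_{u_0}\not\equiv 0$. By the inductive hypothesis, the set $E=\{y\in\R^{n-1}:Q_{u_0}(y)=0\}$ has $(n-1)$-dimensional Lebesgue measure zero. For every $y\notin E$, the univariate polynomial $x_n\mapsto P(y,x_n)$ is not identically zero, since its coefficient of $x_n^{u_0}$ equals $Q_{u_0}(y)\neq 0$; hence, by the base case, the slice $Z_y:=\{x_n\in\R:P(y,x_n)=0\}$ of the zero set $Z:=\{x\in\R^n:P(x)=0\}$ is Lebesgue-null in $\R$.

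Because $P$ is continuous, $Z$ is closed and in particular Borel measurable, so Tonelli's theorem applies to the indicator $1_Z$ and gives that the $n$-dimensional Lebesgue measure of $Z$ equals $\int_{\R^{n-1}}\mathrm{Leb}_1(Z_y)\,dy$. The integrand vanishes for every $y\notin E$, and $E$ itself is a Lebesgue-null subset of $\R^{n-1}$, so the integral is zero; thus $Z$ has $n$-dimensional Lebesgue measure zero, completing the induction. The only points needing care are the measurability of $Z$ and of its slices (immediate from continuity of polynomials) and the correct bookkeeping in the Tonelli reduction; the genuine content of the step — and the place where the inductive hypothesis is essential — is the observation that the lower-dimensional ``bad'' set $E$, on which the dependence of $P$ on the last variable degenerates, is itself null, while off $E$ everything is governed by the one-variable case.
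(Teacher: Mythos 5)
Your proof is correct: the induction on dimension, writing $P=\sum_u Q_u(x_1,\dots,x_{n-1})x_n^u$, applying the inductive hypothesis to a nonvanishing coefficient $Q_{u_0}$, and concluding via Tonelli applied to the (closed, hence measurable) zero set is the standard and complete argument, with the measurability and the null exceptional set $E$ handled properly. Note that the paper itself states this lemma without proof, treating it as a known fact from measure theory, so your write-up supplies exactly the classical argument the authors implicitly rely on; there is no competing approach in the paper to compare against.
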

The following lemma is an immediate application of the one above and illustrates how we will apply the above lemma in this paper.
\begin{lemma}\label{discriminantLemma}
	Any non-zero polynomial $P_{\theta^{(k)}}\left(\lambda\right)$ on $\R\times\Theta^k$ has simple roots (in terms of $\lambda$) outside a set of measure zero in $\Theta^k$.
\end{lemma}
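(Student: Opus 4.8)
The plan is to deduce this from Lemma \ref{PolyLemma} via the classical discriminant criterion for simple roots. First I would write $P_{\theta^{(k)}}(\lambda)=\sum_{u=0}^{d}a_u(\theta^{(k)})\lambda^u$, where each $a_u$ is an ordinary polynomial on $\Theta^k$ and $d$ is taken maximal with $a_d\not\equiv 0$; the case $d=0$ is vacuous (no roots in $\lambda$), so assume $d\ge 1$. For a fixed $\theta^{(k)}$ with $a_d(\theta^{(k)})\not=0$ the polynomial $\lambda\mapsto P_{\theta^{(k)}}(\lambda)$ has degree exactly $d$, and it has a repeated complex root if and only if it shares a root with its $\lambda$-derivative $P_{\theta^{(k)}}'$, i.e.\ if and only if the Sylvester resultant $\mathrm{Res}_\lambda\!\big(P_{\theta^{(k)}},P_{\theta^{(k)}}'\big)$ vanishes.

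I would then observe that this resultant, being the determinant of the Sylvester matrix, is a fixed polynomial in the coefficients $a_0(\theta^{(k)}),\dots,a_d(\theta^{(k)})$, hence is itself a polynomial $Q(\theta^{(k)})$ on $\Theta^k$; likewise $a_d$ is a polynomial on $\Theta^k$. The statement then follows by applying Lemma \ref{PolyLemma} to the polynomial $a_d\cdot Q$ on $\Theta^k$, \emph{provided it is not identically zero}: in that case it vanishes only on a set $N\subset\Theta^k$ of Lebesgue measure zero, and for every $\theta^{(k)}\notin N$ we have $a_d(\theta^{(k)})\not=0$, so $P_{\theta^{(k)}}$ genuinely has degree $d$, and $Q(\theta^{(k)})=\mathrm{Res}_\lambda(P_{\theta^{(k)}},P_{\theta^{(k)}}')\not=0$, so $P_{\theta^{(k)}}$ has no repeated root, i.e.\ all its roots are simple.

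The main obstacle is exactly the italicised proviso. One has $a_d\not\equiv 0$ by the choice of $d$, but $Q\not\equiv 0$ must be argued, and it can genuinely fail — $(\lambda-\theta_1)^2$ has a double root for every parameter value — so a genericity assumption is unavoidable here. The condition $Q\not\equiv 0$ is equivalent to $P_{\theta^{(k)}}$ being squarefree when viewed as a polynomial in $\lambda$ over the rational function field $\R(\theta^{(k)})$, equivalently to the existence of at least one $\theta^{(k)}$ (with $a_d$ nonzero there) at which $P_{\theta^{(k)}}$ already has only simple roots; this is the reading of ``non-zero polynomial'' that makes the lemma correct, or one should simply add ``with $\lambda$-discriminant $\not\equiv 0$'' to the hypotheses. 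In every place the lemma is invoked later, this is checked directly, e.g.\ by exhibiting an explicit parameter value — such as one coming from diagonal covariance matrices — at which the relevant polynomial in $\lambda$ has distinct roots. Everything else (polynomiality of the resultant, the degree-drop locus $\{a_d=0\}$) is routine.
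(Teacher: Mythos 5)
Your proof takes the same route as the paper: the discriminant (equivalently, the resultant of $P_{\theta^{(k)}}$ and its $\lambda$-derivative) is itself a polynomial on $\Theta^k$, so Lemma~\ref{PolyLemma} finishes the argument. You also correctly flag what the paper's one-line proof states without justification — that this discriminant polynomial is not identically zero on $\Theta^k$ — and you are right that this does \emph{not} follow from $P_{\theta^{(k)}}(\lambda)$ being a non-zero polynomial on $\R\times\Theta^k$ (your $(\lambda-\theta_1)^2$ example is decisive); the lemma should be read with the implicit hypothesis ``$\lambda$-discriminant $\not\equiv 0$ on $\Theta^k$,'' which is exactly what the paper verifies each time it invokes the lemma, by exhibiting an explicit $\theta^{(k)}$ with distinct roots.
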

\begin{proof}
	$P$ only has simple roots if the discriminant is non-zero. The discriminant is a (non-zero) polynomial on $\Theta^k$ and therefore is only zero on a set of measure zero in $\Theta^k$.
\end{proof}
We now prove some important properties for $M^{i,j}$.
\begin{lemma}\label{matrixLemma}
	The following results holds for the affine parameter matrix $M^{i,j}(\lambda)$ as defined in \ref{parammatrix}.
	\begin{itemize}
		\item [1] Outside a set $N$ of measure zero in $\Theta^k$, there are at most $p$ elements in $\Lambda$ and for any $\lambda\in\Lambda$,
		$rank\left(M^{i,j}(\lambda)\right)=p-1$.
		\item[2] Moreover for any such $\lambda$ there is a unique set of $p-1$ real numbers $s_1(\theta^{(k)},\lambda),...,s_{p-1}(\theta^{(k)},\lambda)$, all of which must be non-zero and such that 
		\begin{align}\label{coeffrepreq}
			&M^{i,j}_{p,.}(\lambda)=\sum_{u=1}^{p-1}s_u(\theta^{(k)},\lambda)M^{i,j}_{u,.}(\lambda),
		\end{align}
		outside $N$.
	\end{itemize}
	
\end{lemma}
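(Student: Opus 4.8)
\emph{Plan of proof.} Fix a pair $i<j$; the exceptional set $N$ will be the finite union, over all such pairs, of the null sets constructed below. Abbreviate $M:=M^{i,j}$. By Definition~\ref{parammatrix}, $M(\lambda)=(1-\lambda)A_i+\lambda A_j$ is a symmetric matrix pencil whose entries are affine in $\lambda$ and affine in the coordinates of $\theta^{(k)}$ (the weights and intercepts being fixed); hence $D(\lambda,\theta^{(k)}):=\det M(\lambda)$ is a polynomial on $\R\times\Theta^k$ of degree at most $p$ in $\lambda$. Every failure mode is treated the same way: it is shown to force some polynomial in $\theta^{(k)}$ to vanish, that polynomial is certified not identically zero by exhibiting one admissible $\theta^{(k)}$ at which it is non-zero, and Lemma~\ref{PolyLemma} (or Lemma~\ref{discriminantLemma}) then makes the bad set Lebesgue-null. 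For the bound $|\Lambda|\le p$: since each $f_i$ carries a weight $w_i(u_0)\ne0$, one may choose the covariance coordinates of $\theta^{(k)}$ so that $\E[X_{u_0}(l_1)X_{u_0}(l_2)]=\delta_{l_1l_2}/w_i(u_0)$ with all other covariances zero, making $A_i=M(0)=I_p$ and $D(0,\theta^{(k)})=1$; writing $D(\lambda,\theta^{(k)})=\sum_{d=0}^p P_d(\theta^{(k)})\lambda^d$, some $P_d$ is not identically zero, so off a null set $D(\cdot,\theta^{(k)})$ is a non-zero polynomial of $\lambda$-degree $\le p$ and has at most $p$ roots.

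For the rank statement the key algebraic input is: if $\lambda_0$ is a root of $D(\cdot,\theta^{(k)})$ with $\operatorname{rank}M(\lambda_0)\le p-2$, then $\lambda_0$ is a multiple root. Indeed $\operatorname{rank}M(\lambda_0)\le p-2$ forces $\operatorname{adj}M(\lambda_0)=0$ (its entries are the $(p-1)\times(p-1)$ minors); differentiating $M(\lambda)\operatorname{adj}M(\lambda)=D(\lambda,\theta^{(k)})I_p$ and evaluating at $\lambda_0$ — the term $M'(\lambda_0)\operatorname{adj}M(\lambda_0)$ drops out — gives $M(\lambda_0)\,[\operatorname{adj}M]'(\lambda_0)=D'(\lambda_0,\theta^{(k)})I_p$, whose left side has rank $\le\operatorname{rank}M(\lambda_0)<p$ and is therefore singular, so $D'(\lambda_0,\theta^{(k)})=0$; together with $D(\lambda_0,\theta^{(k)})=0$ this says $\lambda_0$ has multiplicity $\ge2$. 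Hence, off the set where $D(\cdot,\theta^{(k)})$ has a repeated root, $\operatorname{rank}M(\lambda)=p-1$ for every $\lambda\in\Lambda$, which is Part~1. That this set is null is Lemma~\ref{discriminantLemma} applied to $D$, i.e. the $\lambda$-discriminant of $D$ is a non-zero polynomial in $\theta^{(k)}$ (keeping in mind the routine point that the leading coefficient $P_p$ may vanish on a lower-dimensional subset): for the diagonal pencil $M(\lambda)=\operatorname{diag}\big((1-\lambda)+\lambda d_1,\dots,(1-\lambda)+\lambda d_p\big)$ with $d_1,\dots,d_p$ distinct, non-zero and different from $1$ — realizable by suitable covariances when the weight vectors $(w_i(\cdot))$ and $(w_j(\cdot))$ are linearly independent, as they are in the classical case~\eqref{classic} — $D$ has $p$ distinct roots.

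For Part~2, work off the null set just built and fix $\lambda\in\Lambda$, so $\operatorname{rank}M(\lambda)=p-1$ (take $\lambda$ real, so the resulting $s_u$ are real; complex roots come in conjugate pairs carrying conjugate coefficients). The $p$ rows of $M(\lambda)$ then span a $(p-1)$-dimensional space, the left kernel is a line $\R\xi$ (which, $M$ being symmetric, equals the kernel), and $\sum_{u=1}^p\xi_u M_{u,\cdot}(\lambda)=0$ is the unique relation among the rows up to scale. Consequently the equation $M_{p,\cdot}(\lambda)=\sum_{u=1}^{p-1}s_u M_{u,\cdot}(\lambda)$ is solvable, its solution is unique, and all its entries $s_u=-\xi_u/\xi_p$ are non-zero, precisely when $\xi$ has no vanishing coordinate — equivalently when, for every coordinate $v$, the $p\times(p-1)$ matrix obtained by deleting the $v$-th column of $M(\lambda)$ has full column rank. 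So it remains to discard, off a further null set, the event that for some $v$ this deleted-column matrix has rank $\le p-2$ at some $\lambda$; equivalently that its $(p-1)\times(p-1)$ minors, as polynomials in $\lambda$, have a common root — a resultant-type polynomial condition on $\theta^{(k)}$. It is not identically satisfied because for a pencil between $I_p$ and a symmetric matrix in general position (so that the generalized eigenvalues are distinct and non-zero and the corresponding eigenvectors have no zero coordinate — again realizable when the weight vectors are linearly independent) the null vector at each of the $p$ roots has all coordinates non-zero. Lemma~\ref{PolyLemma} makes each such set null; adjoining them to $N$ completes the proof.

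The main obstacle is the rank claim together with its genericity support. One must first secure the algebraic fact that a rank drop of two or more at a root forces a double root of $\det M^{i,j}$ (handled above through the adjugate identity), and then — more delicately — produce \emph{explicit admissible} parameter values certifying that the various discriminants and resultants are not identically zero on $\Theta^k$. Realizability is the subtle point: from fixed weights one can realize only the pencils $(A_i,A_j)$ lying in the image of the affine map $\theta^{(k)}\mapsto(A_i,A_j)$, which is all of $(\text{symmetric})\times(\text{symmetric})$ exactly when the weight vectors $(w_i(\cdot))$ and $(w_j(\cdot))$ are linearly independent (as they are in~\eqref{classic}); and when forming the $\lambda$-discriminant one must track the degree of $\det M^{i,j}(\cdot,\theta^{(k)})$ possibly dropping on a lower-dimensional subset of $\Theta^k$.
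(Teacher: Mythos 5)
Your proposal follows the paper's overall blueprint (reduce each failure mode to a polynomial vanishing on $\Theta^k$, apply Lemma~\ref{PolyLemma}/\ref{discriminantLemma}, certify non-triviality with one explicit parameter value), but your key algebraic step for Part~1 is genuinely different. The paper fixes a single $(p-1)$-minor $P_{u,v}$, argues via the resultant $\mathrm{res}(P,P_{u,v})$ that it is nonzero at every root of $P=\det M^{i,j}$ off a null set, and concludes rank $p-1$. You instead differentiate the adjugate identity $M(\lambda)\,\mathrm{adj}\,M(\lambda)=D(\lambda)I$ to show that a rank drop of two or more at $\lambda_0$ forces $D'(\lambda_0)=0$, so Part~1 reduces to simplicity of the roots of $D$, i.e.\ to the discriminant lemma. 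This is tidier, since it handles all minors at once rather than tracking one chosen minor's resultant. For Part~2 the two arguments coincide modulo phrasing (kernel vector $\xi$ with no vanishing coordinate vs.\ non-vanishing first minors $\det M^{i,j}_{u,1}$; for a symmetric rank-$(p-1)$ matrix the entries of $\mathrm{adj}\,M$ are proportional to $\xi_u\xi_v$, so these are the same condition).

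Your realizability caveat is the more substantive contribution, and it points to a gap that is present in the paper as well. Because the weights are fixed, $\theta^{(k)}\mapsto(A_i,A_j)$ is a linear map whose image is all of $(\text{symmetric})\times(\text{symmetric})$ only when $(w_i(\cdot))$ and $(w_j(\cdot))$ are linearly independent in $\R^k$. If instead $w_j=cw_i$ with $c\notin\{0,1\}$, then $M^{i,j}(\lambda)=(1-\lambda(1-c))A_i$, whose determinant has a single root of multiplicity $p$ at which $M^{i,j}$ vanishes entirely; so Part~1 fails for \emph{every} $\theta^{(k)}$ with $A_i$ nonsingular, not merely on a null set. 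The paper's own certificate matrix $M'(\lambda)$ is moreover not symmetric, so it does not lie in the realizable pencil family under any choice of weights, meaning the paper's non-triviality check is itself incomplete. You flag the obstruction but leave it open; a complete proof must either add the hypothesis that no two weight vectors are proportional (automatic in the classical case~\eqref{classic}), or argue separately that proportional-weight pairs never contribute to the final argmin set, e.g.\ by observing that for such pairs $\{f_i=f_j\}$ is a level set of $f_i$ and therefore carries no isolated constrained minimizer to estimate.
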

\begin{proof}
	Denote $P(\lambda)=\det\left(M^{i,j}(\lambda)\right)$, by the fundamental theorem of algebra, $P$ has at most $p$ distinct solutions in $\mathbb{C}$. The rank of $M^{i,j}(\lambda)$ is $p-1$ if and only if there exists some non-zero minor, that is to say there exists a minor that has no roots in common with $P$. Consider a submatrix $M_{u,v}(\lambda)$ (where the subscripts $u,v$ indicate that we remove row $u$ and column $v$) of $M^{i,j}(\lambda)$, and let $P_{u,v}(\lambda)=\det\left(M_{u,v}(\lambda)\right)$. We wish to show that outside a set of measure zero in $\Theta^k$, there exists a minor such that $P$ and $P_{u,v}$ has no roots in common. $P$ and $P_{u,v}$ has a root in common in $\mathbb{C}$ if and only if their resultant is zero (since $\mathbb{C}$ is an algebraically closed field). But the resultant is a polynomial on $\Theta^k$ so by Lemma \ref{PolyLemma} this polynomial is either the zero polynomial or it is zero on a set of measure zero in $\Theta^k$. So to prove the first statement of the theorem, it suffices to show that there exists any $\theta^{(k)}\in\Theta^k$ such that the resultant of $P$ and $P_{u,v}$ is not zero. Recall however that this is true if and only if for some $\theta^{(k)}$, $P(\lambda)=0$ implies $P_{u,v}(\lambda)\not=0$ for some $u,v\in\{1,...,p\}$ and all $\lambda\in\R$. For this purpose consider the matrix $M'(\lambda)$ where 
	%
	\begin{equation*}
		M'(\lambda) = 
		\begin{pmatrix}
			\lambda & \lambda-1 &0 & \cdots & 0 \\
			0 & 1 & 0 & \cdots & 0  \\
			\vdots  & \vdots  & \ddots & \vdots & \vdots \\
			0 & \cdots & 0 & 1 & 0  \\
			0 & \cdots & 0 & 0 & 1
		\end{pmatrix}
	\end{equation*}
	In this case the only root to $P$ is given by $\lambda=0$ while the only root for $P_{1,2}$ is given by $\lambda=1$. This proves (1). As for (2), it is enough to show that every first minor of the form $\det\left(M^{i,j}_{u,1}(\lambda)\right)$ for $u=1,...,p-1$ is non-zero since then $M^{i,j}_{p,.}(\lambda)$ must be linearly independent from all subsets of $n-2$ other rows. $\det\left(M^{i,j}_{1,u}(\lambda)\right)=0$ for a root of $P$ if and only if $res(P_{u,v},P)$ is zero. Again it suffices to show that for each $1\le u\le p-1$ there exists any $\theta^{(k)}\in\Theta^k$ such that $P$ and $P_{u,1}$ have no common root. Letting $\theta$ be such that $M^{i,j}(\lambda)$ is the diagonal matrix with $\lambda$ as diagonal element number $u$ and all other diagonal entries one implies that $P$ only has the root $0$ and $P_{u,1}$ has no roots at all. This completes the proof of the final claim.
\end{proof}
\begin{lemma}\label{matrixNoTiesLemma}
	The coefficients $\{s_u(\theta^{(k)},\lambda)\}_{u=1}^{p-1}$ in Lemma \ref{matrixLemma} are rational functions on $\R\times\Theta^k$, whose roots in terms of $\lambda$ do not lie in $\Lambda$ outside some zero set in $\Theta^k$.
\end{lemma}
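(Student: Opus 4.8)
The plan is to read each coefficient $s_u$ off the (one-dimensional) left null space of $M^{i,j}(\lambda)$ as a quotient of two first minors, and then to deduce that its roots in $\lambda$ avoid $\Lambda$ directly from Lemma \ref{matrixLemma}, so that no genericity argument beyond (an enlargement of) the exceptional set $N$ of that lemma is needed.

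Fix a pair $i<j$ for which $f_i,f_j$ intersect (the content is vacuous otherwise) and work with $\theta^{(k)}$ outside $N$. For $\lambda\in\Lambda$, relation \eqref{coeffrepreq} says exactly that the row vector $(s_1(\theta^{(k)},\lambda),\dots,s_{p-1}(\theta^{(k)},\lambda),-1)$ lies in the left null space of $M^{i,j}(\lambda)$, and by Lemma \ref{matrixLemma}(1) this space is one-dimensional since $\mathrm{rank}\,M^{i,j}(\lambda)=p-1$. In the notation of Lemma \ref{matrixLemma}, write $M^{i,j}_{l,1}(\lambda)$ for $M^{i,j}(\lambda)$ with row $l$ and column $1$ deleted, and consider the cofactor row vector $n(\lambda)$ with entries $n(\lambda)_l=(-1)^{l+1}\det\!\big(M^{i,j}_{l,1}(\lambda)\big)$; this always lies in the left null space, and by (the proof of) Lemma \ref{matrixLemma}(2) — equivalently, by a resultant argument as in Lemma \ref{matrixLemma} — we have $\det\!\big(M^{i,j}_{u,1}(\lambda)\big)\neq 0$ on $\Lambda$ for $u=1,\dots,p-1$, so $n(\lambda)\neq 0$ and hence spans that space. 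Comparing it with $(s_1,\dots,s_{p-1},-1)$ gives a scalar $\mu\neq 0$ with $(s_1,\dots,s_{p-1},-1)=\mu\,n(\lambda)$; matching the last coordinate forces $\det\!\big(M^{i,j}_{p,1}(\lambda)\big)\neq 0$, and matching the remaining coordinates gives
$$s_u(\theta^{(k)},\lambda)=\sigma_u\,\frac{\det\!\big(M^{i,j}_{u,1}(\lambda)\big)}{\det\!\big(M^{i,j}_{p,1}(\lambda)\big)},\qquad u=1,\dots,p-1,$$
with $\sigma_u\in\{-1,1\}$ a fixed sign.

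Because the entries of $M^{i,j}(\lambda)$ are affine in $\lambda$ with coefficients depending polynomially (indeed linearly) on $\theta^{(k)}$, the two determinants above are polynomials in $(\lambda,\theta^{(k)})$; the denominator is not the zero polynomial on $\R\times\Theta^k$ (for a suitable $\theta^{(k)}$ one can already make $M^{i,j}(0)$ positive definite with $M^{i,j}_{p,1}(0)$ non-singular, and for such pairs $\det M^{i,j}(\lambda)$ is non-constant so $\Lambda\neq\emptyset$). Hence the displayed formula defines $s_u$ as a rational function on $\R\times\Theta^k$ which, on $\Lambda$ (for $\theta^{(k)}\notin N$), coincides with the coefficient of Lemma \ref{matrixLemma}. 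Finally, for $\lambda_0\in\Lambda$ and $\theta^{(k)}\notin N$ we have $s_u(\theta^{(k)},\lambda_0)\neq 0$ by Lemma \ref{matrixLemma}(2) and $\det\!\big(M^{i,j}_{p,1}(\lambda_0)\big)\neq 0$ by the above, so $\det\!\big(M^{i,j}_{u,1}(\lambda_0)\big)=\sigma_u\,s_u(\theta^{(k)},\lambda_0)\,\det\!\big(M^{i,j}_{p,1}(\lambda_0)\big)\neq 0$; since every root of $s_u$ in $\lambda$ is a zero of its numerator $\det(M^{i,j}_{u,1}(\lambda))$, no root of $s_u$ lies in $\Lambda$. (Absorbing into $N$ the additional zero sets where $\det M^{i,j}$ degenerates in $\lambda$ and where the positive-definite witness fails does not enlarge it beyond a zero set.)

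The step I expect to matter most is the use of the one-dimensionality of the left null space: it is precisely this that upgrades ``$s_u\neq 0$ on $\Lambda$'' from Lemma \ref{matrixLemma}(2) to ``the numerator minor $\det(M^{i,j}_{u,1}(\lambda))$ is nonzero on $\Lambda$'', and it is also what forces the denominator minor $\det(M^{i,j}_{p,1}(\lambda))$ — which is not among the minors handled directly in Lemma \ref{matrixLemma} — to be nonzero on $\Lambda$ without a new resultant computation. Choosing the deleted column to be column $1$, so that the numerator minors are exactly those already controlled in the proof of Lemma \ref{matrixLemma}, is what makes the bookkeeping close; apart from that, only Lemma \ref{matrixLemma} and Lemma \ref{PolyLemma} are needed.
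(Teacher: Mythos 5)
Your proof is correct but takes a genuinely different route from the paper's. The paper proceeds by an explicit fraction-free Gaussian elimination, building intermediate matrices $T_1(M^{i,j}(\lambda)),\dots,T_{p-1}(M^{i,j}(\lambda))$; at each step it must show afresh (via a resultant and the witness matrix $\tilde{M}$) that the next pivot and the next subdiagonal combination do not vanish on $\Lambda$ outside a zero set, then reads the $s_u$'s off the chain of row operations \eqref{Mp}. You instead use the one-dimensionality of the left null space at $\lambda\in\Lambda$: the column-$1$ cofactor vector $n(\lambda)$ is always a left annihilator, it is nonzero on $\Lambda$ because $\det\bigl(M^{i,j}_{u,1}(\lambda)\bigr)\neq 0$ for $u<p$ (which is precisely what the proof of Lemma \ref{matrixLemma}(2) establishes), and comparing it against $(s_1,\dots,s_{p-1},-1)$ yields the closed form $s_u=\pm\det\bigl(M^{i,j}_{u,1}\bigr)/\det\bigl(M^{i,j}_{p,1}\bigr)$ together with $\det\bigl(M^{i,j}_{p,1}\bigr)\neq 0$ on $\Lambda$ essentially for free. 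This recycles the resultant computations already done in Lemma \ref{matrixLemma} rather than repeating $O(p)$ new ones, and it gives an explicit single-quotient representation instead of a nested product of pivots — a cleaner and shorter argument. Two small points worth noting: (i) the parenthetical justification that the denominator is not identically zero needs a witness where the $(p,1)$-minor is nonsingular (the identity does not work since removing row $p$ and column $1$ leaves a zero first row; a symmetric tridiagonal matrix does), but such a witness exists so the conclusion stands; (ii) your rational expression is not the same rational function as the paper's — they agree only at the finitely many $\lambda\in\Lambda$ — so the step in the proof of Theorem \ref{optimalregthm} where $\tilde{M}$ is plugged in and $s_1\equiv\cdots\equiv s_{p-1}\equiv 1$ with $G\equiv 1$ is invoked would need to be recomputed if your formula were substituted for the paper's; this does not affect the correctness of your proof of the lemma itself.
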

\begin{proof}
	~~\\For any root of $\lambda\in\Lambda$, let $T_1(M^{i,j}(\lambda))$ denote the matrix resulting from subtracting  $\frac{M^{i,j}(\lambda)(u,1)}{M^{i,j}(\lambda)(1,1)}M^{i,j}_{1,.}$ from row number $u$, $u=2,...,p$ in $M^{i,j}(\lambda)$, for those $\theta^{(k)}\in\Theta^k$ such that $M^{i,j}(\lambda)(1,1)\not=0$, $\forall \lambda\in\Lambda(\theta^{(k)})$. For $1<v<p$, let $T_v(M^{i,j}(\lambda))$ denote the matrix resulting from subtracting\\ $\frac{T_{v-1}(M^{i,j}(\lambda))(v,u)}{T_{v-1}(M^{i,j}(\lambda))(v,v)}T_{v-1}(M^{i,j}(\lambda)_{v,.})$ (whenever $T_{v-1}(M^{i,j}(\lambda))(v,v)\not=0$) from row number $u$, $u=v+1,...,p$ in $T_{v-1}(M^{i,j}(\lambda))$. Then outside a zero set in $\Theta^k$, $T_vM^{i,j}(\lambda)$ is well-defined for $v=1,...,p-1$ and $(T_vM^{i,j}(\lambda))_{p,.}=(0,...,0)$.
	The following procedure shows us that outside a zero set in $\Theta^k$ we may bring $M^{i,j}(\lambda)$ to a row echelon form in $p-1$ steps, without permuting any rows or columns while making the final row the only zero-row in this matrix. First we see that $d_1(\lambda)=(M^{i,j}(\lambda))(1,1)$ and $P(\lambda)$ only have a common root if their resultant is zero. As before it suffices to find $\theta^{(k)}\in\Theta^k$ such that $d_1(\lambda)\not=0$ for any $\lambda\in\Lambda(\theta^{(k)})$. Consider the matrix $\tilde{M}$ with $\tilde{M}(l,l)=1$ for $l=1,...,p-1$, $\tilde{M}(p,p)=p-\lambda$, $\tilde{M}(p,l)=1$ for $l=1,...,p-1$, $\tilde{M}(1,l)=1$ for $l=1,...,p-1$ and zero on all other entries. 
	\begin{equation*}
		\tilde{M}(\lambda) = 
		\begin{pmatrix}
			1 & 0 & \cdots & 0 & 1 \\
			0 & 1 & \cdots & 0 & 1 \\
			\vdots  & \vdots  & \ddots & \vdots & \vdots \\
			0  & 0  & \cdots & 1 & 1  \\
			1 & 1 & \cdots & 1 & \lambda+p 
		\end{pmatrix}
	\end{equation*}
	Then $d_1\equiv1$ and $\Lambda(\theta^{(k)})=\{0\}$ and this shows that $d_1$ and $P$ do not have common roots outside some zero set $N_1$, so that we may divide by $d_1$. On $N_1$, $\frac{1}{d_1}=\frac{1}{M^{i,j}(1,1)}$ is well defined and in order to clear the first column below row 1 we want to ensure that we do not clear the next element below on the diagonal ($(M^{i,j}(\lambda))(2,2)$) when doing so. We therefore have to check that $\frac{(M^{i,j}(\lambda))(2,1)}{(M^{i,j}(\lambda))(1,1)}(M^{i,j}(\lambda))(1,2)\not=(M^{i,j}(\lambda))(2,2)$, which is implied if $(M^{i,j}(\lambda))(2,1)(M^{i,j}(\lambda))(1,2)-(M^{i,j}(\lambda))(2,2)(M^{i,j}(\lambda))(1,1)\not=0$. $(M^{i,j}(\lambda))(2,1)(M^{i,j}(\lambda))(1,2)-(M^{i,j}(\lambda))(2,2)(M^{i,j}(\lambda))(1,1)$ is a polynomial on $\Theta^k$ and we wish to show it has no roots in common with $P$ outside a zero set $N_2\in\Theta^k$. It suffices to find a $\theta^{(k)}\in\Theta^k\setminus N_1$ such that the two polynomials do not have a root in common. As $\tilde{M}(2,1)\tilde{M}(1,2)-\tilde{M}(2,2)\tilde{M}(1,1)=-1$, the matrix $\tilde{M}$ still suffices for this purpose. This also shows that all entries in $T_1(M^{i,j}(\lambda))$ are rational functions on $\R\times\Theta^k$. Suppose now that $1<v\le p-1$, $(T_{v-1}(M^{i,j}(\lambda))(v,v)\not=0$, all entries in $(T_{v-1}(M^{i,j}(\lambda))$ are rational on $\R\times\Theta^k$  and
	\begin{align}\label{dia}
		&\frac{T_{v-1}(M^{i,j}(\lambda))(v,v-1)}{(T_{v-1}(M^{i,j}(\lambda))(v-1,v-1)}T_{v-1}\left(M^{i,j}(\lambda)\right)(v-1,v)\not=T_{v-1}\left(M^{i,j}(\lambda)\right)(v,v),
	\end{align}
	outside a zero set $N_{v-1}\in\Theta^k$. This means that we have successfully cleared everything below the main diagonal until the $v-1$:th element on the diagonal counting from the upper left corner. The elements of $(T_{v}(M^{i,j}(\lambda)))$ are obviously rational functions on $\R\times\Theta^k$ due to the induction hypothesis and the fact that the elements in $(T_{v}(M^{i,j}(\lambda)))$ are formed by products of rational functions in $(T_{v-1}(M^{i,j}(\lambda)))$. Moreover it has no poles in $\Lambda$, outside $N_{v-1}$ (this follows from \eqref{dia}), so the roots of $(T_{v}(M^{i,j}(\lambda))(v,v)$ coincides with those of its numerator polynomial, $Q_v$. To show $Q_v$ is not the zero polynomial take the above $\tilde{M}$ again and note that $(T_{v}(\tilde{M})(v,v)=\tilde{M}(v,v)$ for $v=1,...,p-1$. To show that
	$$\frac{T_{v}(M^{i,j}(\lambda))(v+1,v)}{(T_{v}(M^{i,j}(\lambda))(v,v)}T_v(M^{i,j}(\lambda))(v,v+1)\not=T_v(M^{i,j}(\lambda))(v+1,v+1),$$
	for $\lambda\in\Lambda$, it suffices to show that the numerator polynomial of 
	$$T_{v}(M^{i,j}(\lambda))(v+1,v)T_{v}(M^{i,j}(\lambda))(v,v+1)-T_{v}(M^{i,j}(\lambda))(v+1,v+1)T_{v}(M^{i,j}(\lambda))(v,v),$$
	does not have a root in common with $P$, i.e., their resultant is only zero on a zero set. For this purpose we can again recycle $\tilde{M}$
	where we have that 
	$$T_{v}(\tilde{M})(v+1,v)T_{v}(\tilde{M})(v,v+1)-T_{v}(\tilde{M})(v+1,v+1)T_{v}(\tilde{M})(v,v)=-1,$$
	similarly to before, when $v<p$. We then let $N'_{v}$ denote the zero set where either $Q_v(\lambda)=0$ or $\frac{T_{v}(M^{i,j}(\lambda))(v+1,v)}{(T_{v}(M^{i,j}(\lambda))(v,v)}M^{i,j}_{v,v+1}=M^{i,j}(v+1,v+1)$ and then let $N_v=N_1\cup....\cup N_{v-1}\cup N_v'$ which is the desired zero set. Note that there will be two rational functions $e_{p-1},e_p$ on $\R\times\Theta^k$ such that
	\begin{equation*}
		\left(T_{p-2}(M^{i,j}(\lambda))\right)_{p,.} = 
		\begin{pmatrix}
			0 & \cdots & 0 & e_{p-1} & e_p \\
		\end{pmatrix},
	\end{equation*}
	i.e. the only two possible non-zero elements of $\left(T_{p-2}(M^{i,j}(\lambda))\right)_{p,.}$ are the last two elements. Now we must have that
	\begin{equation*}
		\left(T_{p-1}(M^{i,j}(\lambda))\right)_{p,.} = 
		\begin{pmatrix}
			0 & \cdots & 0 \\
		\end{pmatrix},
	\end{equation*}
	indeed, since if this was not the case we would have that $\mathrm{rank}\left(T_{p-1}(M^{i,j}(\lambda))\right)=p$ (since we have all elements on the main diagonal are non-zero and all elements below it have been cleared), but $\mathrm{rank}\left(T_{p-1}(M^{i,j}(\lambda))\right)=\mathrm{rank}\left(M^{i,j}(\lambda)\right)=p-1$ since $\lambda\in \Lambda$.
	The fact that $T_{p-1}(M^{i,j}(\lambda))_{p,.}=0$ leads to the following equation.
	\begin{align}\label{Mp}
		&M^{i,j}(\lambda)_{p,.}=\frac{(M^{i,j}(\lambda))(p,1)}{(M^{i,j}(\lambda))(1,1)} M^{i,j}(\lambda)_{1,.}+\frac{T_1(M^{i,j}(\lambda))(p,2)}{T_1(M^{i,j}(\lambda))(2,2)} T_1(M^{i,j}(\lambda))_{2,.}+....
		\nonumber
		\\
		&+\frac{T_{p-2}(M^{i,j}(\lambda))(p,p-1)}{T_{p-2}(M^{i,j}(\lambda))(p-1,p-1)} T_{p-2}(M^{i,j}(\lambda))_{p-1,.}
	\end{align}
	$T_1(M^{i,j}(\lambda))_{2,.}$ is a linear combination of $(M^{i,j}(\lambda))_{1,.}$ and $(M^{i,j}(\lambda))_{2,.}$ with coefficients that are rational on $\R\times\Theta^k$. Similarly $T_v(M^{i,j}(\lambda))_{v,.}$ is a linear combination of $\{(M^{i,j}(\lambda))_{u,.}\}_{u=1}^v$, for $v=2,...,p-1$, with coefficients that are rational on $\R\times\Theta^k$. This together with \eqref{Mp} and the uniqueness of the representation \eqref{coeffrepreq} shows that the coefficients $\{s_u\}_{u=1}^{p-1}$ are rational on $\R\times\Theta^k$.
\end{proof}

\begin{lemma}\label{coefficientslemma}
	Suppose $A$ is a $p\times p$ matrix of rank $p-1$ such that $A_{p,.}$ is linearly independent of every subset of $p-2$ other rows of $A$. Let $A_{p,.}=\sum_{u=1}^{p-1}c_uA_{u,.}$ be its unique decomposition in $span\{A_{1,.},...,A_{p-1,.}\}$. If $\{A(n)\}_n$ is a sequence of $p\times p$ matrices of rank $p-1$ that also share the property that $A_{p,.}(n)$ is linearly independent of every subset of $p-2$ other rows of $A(n)$ and such that $\lim_{n\to\infty}\n A-A(n)\n=0$ (for any matrix norm $\n.\n$) then if we let $A_{p,.}(n)=\sum_{u=1}^{p-1}c_u(n)A_{u,.}(n)$ be its unique decomposition in $span\{A_{1,.}(n),...,A_{p-1,.}(n)\}$   it follows that $c_u(n)\to c_u$
\end{lemma}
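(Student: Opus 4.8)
The plan is to reduce the whole statement to the continuity of matrix inversion on the open set of invertible matrices. First I would record a linear-algebra preliminary: the fact that $A_{p,.}=\sum_{u=1}^{p-1}c_uA_{u,.}$ is the \emph{unique} representation in $\mathrm{span}\{A_{1,.},\dots,A_{p-1,.}\}$ already forces $A_{1,.},\dots,A_{p-1,.}$ to be linearly independent (a dependent family would admit infinitely many representations). Hence the $(p-1)\times p$ matrix $B$ whose rows are $A_{1,.},\dots,A_{p-1,.}$ has full row rank $p-1$, so the Gram matrix $BB^T$ is an invertible $(p-1)\times(p-1)$ matrix. By the same reasoning, for every $n$ the matrix $B(n)$ with rows $A_{1,.}(n),\dots,A_{p-1,.}(n)$ has full row rank and $B(n)B(n)^T$ is invertible.

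Next, writing $c=(c_1,\dots,c_{p-1})$ as a row vector, the defining relation is $A_{p,.}=cB$. Multiplying on the right by $B^T$ and using invertibility of $BB^T$ gives the closed form $c=A_{p,.}B^T\left(BB^T\right)^{-1}$, and identically $c(n)=A_{p,.}(n)B(n)^T\left(B(n)B(n)^T\right)^{-1}$. Here it is essential that $A_{p,.}(n)$ genuinely lies in the row span of $B(n)$ — which holds because $A(n)$ has rank $p-1$, so all $p$ of its rows lie in the $(p-1)$-dimensional span of its first $p-1$ rows — so that this is an exact identity, not merely a least-squares approximation.

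It then remains to pass to the limit. Since all matrix norms on a finite-dimensional space are equivalent, $\n A-A(n)\n\to 0$ implies entrywise convergence, whence $B(n)\to B$, $A_{p,.}(n)\to A_{p,.}$ and $B(n)B(n)^T\to BB^T$. Because $BB^T$ is invertible and inversion is continuous on the invertible matrices, $\left(B(n)B(n)^T\right)^{-1}\to\left(BB^T\right)^{-1}$ for all large $n$; combining this with the continuity of matrix multiplication in the closed forms above yields $c(n)\to c$, as claimed.

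The step needing genuine care — the main obstacle, modest as it is — is the justification of the closed form for $c(n)$ for every fixed $n$, which uses both hypotheses on $A(n)$ (rank $p-1$ and the independence property) to know the representation is exact and unique, not just in the limit. An alternative that avoids Gram matrices is Cramer's rule: since $B$ has rank $p-1$ there is a set $S$ of $p-1$ columns on which the square submatrix $B_S$ is invertible, $c=(A_{p,.})_S\,B_S^{-1}$, and each $c_u$ is a ratio of two $(p-1)\times(p-1)$ minors built from $A_{1,.},\dots,A_{p,.}$, the denominator being $\det B_S\neq 0$; for large $n$ the same $S$ works by continuity of the determinant, and continuity of the minors gives $c_u(n)\to c_u$ directly. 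I would run the Gram-matrix argument as the main line and note Cramer's rule as a remark.
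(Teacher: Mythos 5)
Your proof is correct, and it takes a genuinely different route from the paper's. The paper also starts from the observation that the $p\times (p-1)$ matrix $V=(A_{1,.}^T\ \dots\ A_{p-1,.}^T)$ has full column rank, so that $C=V^+ x$ with $x=A_{p,.}^T$ and $V^+$ the Moore--Penrose inverse; but rather than extracting a closed form and passing to the limit, the paper only uses the bound $\|C(n)\|_\infty\le \|V(n)^+\|_\infty\|A_{p,.}(n)\|_\infty$, together with convergence $V(n)^+\to V^+$ along a rank-stable sequence, to establish boundedness of $\{c_u(n)\}_n$, and then runs a Bolzano--Weierstrass subsequence argument: a non-converging subsequence would produce a second decomposition $A_{p,.}=\sum_u c'_u A_{u,.}$ with $c'\neq c$, contradicting uniqueness. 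Your argument instead writes the explicit formula $c=A_{p,.}B^T(BB^T)^{-1}$ (which, with $V=B^T$, is just $V^+=(V^TV)^{-1}V^T$ made explicit under full column rank) and concludes by continuity of multiplication and of inversion near the invertible matrix $BB^T$. The two proofs are essentially the same piece of linear algebra (the Gram matrix $BB^T=V^TV$ is exactly what makes $V^+$ well-behaved), but yours is more direct: it avoids appealing to continuity of the Moore--Penrose inverse as a black box and skips the compactness/subsequence step entirely. Your Cramer's-rule remark is also sound and gives an even more elementary variant, since it reduces everything to continuity of finitely many minors with a fixed nonvanishing denominator.

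One small point worth making explicit in your write-up: the hypothesis that $A(n)$ has rank $p-1$ already guarantees that each $A_{p,.}(n)$ lies in the row span of $B(n)$ (as you note), so the identity $A_{p,.}(n)=c(n)B(n)$ is exact for every $n$, not just asymptotically; and the independence hypothesis on subsets of $p-2$ rows is not actually needed once you have rank $p-1$ plus the existence (hence, by your opening observation, uniqueness) of the decomposition of the last row. This mirrors a mild redundancy in the lemma's hypotheses as stated in the paper.
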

\begin{proof}
	Let $x=A_{p,.}$ and $V=\left(A_{1,.}^T A_{2,.}^T ... A_{p-1,.}^T\right)$ (which is then a $p\times (p-1)$ matrix) so that $x=VC$ if we let $C=\left(c_1 ... c_{p-1} \right)^T$. Note that $V$ has full column rank which implies $C=V^+x$ is a unique solution for $C$, where $V^+$ denotes the Penrose-inverse. This implies that $ \n C \n_\infty\le \n V^+\n_\infty \n A_{p,.}\n_\infty$. Similarly we get that $ \n C(n) \n_\infty\le \n V(n)^+\n_\infty \n A_{p,.}(n)\n_\infty$ where $V(n)^+$ is the Penrose inverse of the matrix $V(n)=\left(A_{1,.}(n)^T A_{2,.}(n)^T ... A_{p-1,.}(n)^T\right)$ and $C(n)=\left(c_1(n) ... c_{p-1}(n)\right)$. Since all the matrices $\{V(n)\}_n$ have rank $p-1$ and $\n V(n)-V \n_\infty$ by the law of large numbers, it follows that $\n V(n)^+- V^+\n_\infty\xrightarrow{a.s.}0$ which implies $\n V(n)^+\n_\infty \xrightarrow{a.s.}\n V^+\n_\infty$. By the law of large numbers we also have that $\n A_{p,.}(n) \n_\infty\xrightarrow{a.s.}\n A_{p,.}\n_\infty$. All of this implies that the sequences $\{c_u(n)\}_n$ are bounded for $u=1,...,p-1$. Suppose now that for some $1\le m< p$, $c_m(n)\not\to c_m$. Then since the sequence $\{c_m(n)\}_n$ is bounded there exists a subsequence $\{c_m(n_k)\}_k$ such that $c_m(n_k)\to c'_m\not=c_u$ and $c_v(n_k)\to c'_v$, $1\le v\le p-1$. Since $A_{v,.}(n)\to A_{v,.}$, $1\le v\le p$ it follows that $A_{p,.}=\sum_{u=1}^{p-1}c'_uA_{u,.}$ but $c'_u\not=c_u$ this contradicts the uniqueness of the decomposition and this gives us the result.
\end{proof}

We are now ready for the proof of Theorem \ref{optimalregthm}.
\begin{proof}[Proof of Theorem \ref{optimalregthm}]
	For the sake of brevity we will often times omit the dependence on $\textbf{n}$ for most of the estimators in this proof. First we will show that $\mathcal{B}$ is non-empty. Clearly the infinum along such an intersect will always exist but it might be a limit and not an attained minimum. We henceforth consider the infinum along the intersection of $f_i(\beta)$ and $f_j(\beta)$. The determinant of $G^u$ is a polynomial on $\Theta$, so by Lemma \ref{PolyLemma} (since it is clearly not the zero polynomial over $\Theta$) it is only zero on the set $N_1$ of Lebesgue measure zero in $\Theta$. But by definition $G^u$ is positive semi-definite so if it is of full rank then it is in fact positive definite. Expanding the risk function we see that
	$$R_{u}(\beta)=\E\left[Y_u^2\right]-2\beta\E\left[Y_uX_u\right]+\beta G^u\beta^T. $$
	Since $G^u$ is positive definite outside a set of measure zero in $\Theta$ we see that by a diagonalization of $G^u$, $G^u=ODO^T$, where $O$ is an orthogonal matrix and $D$ a diagonal matrix with positive eigenvalues $\lambda_1,...,\lambda_p$ such that $\n \beta O \n=\n\beta\n$. Letting $\tilde{\beta}=\beta O$ (recall that we defined $\beta$ as a row vector and note that $\tilde{\beta}O^T=\beta $), we have that 
	$$\beta G^u\beta^T=\beta ODO^T\beta^T=\tilde{\beta}D\tilde{\beta}^T=\sum_{v=1}^p\lambda_v\tilde{\beta}_v^2.$$
	So if we let $\tilde{\lambda}=\min_{1\le v\le p}\lambda_v$ then
	\begin{align*}
		R_{u}(\beta)
		&=
		\E\left[Y_u^2\right]-2\beta\E\left[Y_uX_u\right]+\beta G^u\beta^T
		=
		\E\left[Y_u^2\right]-2\tilde{\beta}O^T\E\left[Y_uX_u\right]+\sum_{u=1}^p\lambda_u\tilde{\beta}_u^2
		\\
		&=
		\E\left[Y_u^2\right]-2\langle\tilde{\beta}O^T,\E\left[Y_uX_u\right]\rangle+\sum_{v=1}^p\lambda_v\tilde{\beta}_v^2
		\ge 
		\E\left[Y_u^2\right]-2\n \tilde{\beta}O^T\n_2\n \E\left[ Y_uX_u\right]\n_2+\tilde{\lambda}\n \tilde{\beta}\n_2^2
		\\
		&\ge
		\E\left[Y_u^2\right]-2\n \beta\n_2 \E\left[\n Y_uX_u\n_2\right]+\tilde{\lambda}\n \tilde{\beta}\n_2^2
		=\E\left[Y_u^2\right] + \n\beta\n_2\left(\tilde{\lambda}\n\beta\n_2-2\E\left[|Y_u| \n X_u\n_2\right]\right)
		\\
		&\ge
		\E\left[Y_u^2\right] + \n\beta\n_2\left(\tilde{\lambda}\n\beta\n_2-2 \n Y_u\n_{L^2}\sum_{v=1}^p\n X_u(v)\n_{L^2}\right),
	\end{align*}
	where we used both the Jensen and the Cauchy-Schwarz inequality. Therefore if $w_i$ are non-negative (with at least one positive weight)
	$$f_i(\beta)\ge
	\kappa_i+\sum_{u=1}^kw_i(u)\left(\E\left[Y_u^2\right] + \n\beta\n_2\left(\tilde{\lambda}\n\beta\n_2-2 \n Y_i\n_{L^2}\sum_{v=1}^p\n X_i(v)\n_{L^2}\right)\right),$$ 
	outside a set of measure zero in $\Theta^k$. This implies that $\liminf_{\n \beta \n\to\infty}f_i(\beta)=\infty$ when the weights are non-negative and since at least this is the case for at least one $1\le i\le m$ and we take the maximum over the $f_i$s we can conclude that $\liminf_{\n \beta \n\to\infty}f(\beta)=\infty$. Let $\{\beta_n\}_n$ be a sequence such that $\lim_{n\to\infty}f(\beta_n)=\inf_{\beta\in\R^p}f(\beta)$, then we must have $\limsup_{n\to\infty}\n\beta_n\n<\infty$ (otherwise we would have $\limsup_{n\to\infty}f(\beta_n)=\infty$). Since the sequence $\{\beta_n\}_n$  is bounded, there is a subsequence  $\{\beta_{n_k}\}_{k}$ such that $\beta_{n_k}\to\beta^*\in\R^p$ and since $f$ is continuous $f(\beta_{n_k})\to f(\beta^*)=\inf_{\beta\in\R^p}f(\beta)$, so $\mathcal{B}$ is non-empty. In the special case \eqref{classic}, since we can assume $G^1,...,G^k$ all are positive definite, $f$ will be strictly convex in this case (the maximum of strictly convex functions is strictly convex). The statement now follows from the fact that a strictly convex function on an open set has a unique minimum ($\R^p$ in our case). Note that the strict convexity of $f$ in this case implies that for sufficiently large $\textbf{n}$, $\hat{f}$ will also be strictly convex which implies that $|\hat{B}(\textbf{n})|=1$, for such $\textbf{n}$. This will imply $\hat{B}(\textbf{n})\longrightarrow\mathcal{B}$ once we establish $\hat{B}(\textbf{n})\uparrow\mathcal{B}$ (in the general case). 
	\\
	\\
	Next we show that if $f_i$ and $f_j$ intersect then along their intersection there must be an argmin (with respect to $f_i$ and $f_j$, not necessarily with respect to $f$). There exists a sequence $\{\beta_l\}_l$ with $\n \beta_l \n\le C$ for some $C<\infty$ such that $\liminf_{\beta\in\R^p:f_i(\beta)=f_j(\beta)} f_i(\beta)=\lim_{l\to\infty}f_i(\beta_l)$ (otherwise we would get a contradiction similarly to before), so in other words
	$$\liminf_{\beta\in\R^p:f_i(\beta)=f_j(\beta)} f_i(\beta)=\liminf_{\beta\in\R^p:f_i(\beta)=f_j(\beta), \n \beta\n\le C} f_i(\beta).$$
	The set $\left\{\beta\in\R^p:f_i(\beta)=f_j(\beta)\right\}$ is closed since it is the kernel of the continuous function $f_i-f_j$. Therefore the set $\left\{\beta\in\R^p:f_i(\beta)=f_j(\beta), \n \beta\n\le C)\right\}$ is compact, but the infinum of a continuous function over a compact set is a minimum and hence all solutions in this case must be $\arg\min$ solutions. 
	\\
	\\
	We now consider the corresponding empirical case, i.e. we assume that the two estimators $\hat{f}_i$ and $\hat{f}_j$ intersect. By continuity and the law of large numbers it follows that for large $\textbf{n}$, the matrix $\hat{G}_i$ is also positive definite and we may consider the corresponding diagonalization of $\hat{G}_i=\hat{O}\hat{D}\hat{O}^T$. If we let $\hat{\tilde{\lambda}}$ denote the smallest eigenvalue of $\hat{D}$. We can than show analogously to the population case that
	$$\hat{R}_i(\beta)\ge \frac{1}{n} \sum_{l=1}^{n_i}Y_{i,l}^2+\n\beta\n_2\left(\hat{\tilde{\lambda}}\n\beta\n_2-2\sqrt{\frac{1}{n} \sum_{l=1}^{n_i}|Y_{i,l} |^2}\sqrt{\frac{1}{n} \sum_{l=1}^{n_i}\n X_{i,l}\n_2^2}\right).$$
	Therefore we may draw the same conclusion as before, namely if $\hat{f}_i$ and $\hat{f}_j$ intersect for large $\textbf{n}$ then we have an $\arg\min$ along this intersection. 
	\\
	\\
	We now move on to study the inflexion points of individual $f_i$s. To find an explicit solution to the inflexion point $\beta'=\arg\min_{\beta\in\R^p}f_i(\beta)$ we solve the equation $\nabla_\beta  f_i(\beta)=0$. Differentiating we see that,
	\begin{align*}
		&\nabla_\beta f_i(\beta)=2\sum_{u=1}^kw_i(u)\E\left[(X_i)^T\left(X_i\beta-Y^{A_i} \right)\right].
	\end{align*}
	Since $G^u$, $u=1,...,k$ are positive definite and due to the assumption of the weights, $\sum_{u=1}^kw_i(u)G^u$ is either positive definite or negative definite, either way it is full rank and we may therefore obtain, $\beta'=(\sum_{u=1}^kw_i(u)G^u)^{-1}\sum_{u=1}^kw_i(u)Z^u$ (which exists outside $N_1$, where $G^u$ is full rank). Furthermore, since we assumed that $G^i$ is of full rank it follows from the law of large numbers that $(\sum_{u=1}^kw_i(u)\hat{G}^u(\textbf{n}))^{-1}\sum_{u=1}^kw_i(u)\mathbb{Z}^u(\textbf{n})\xrightarrow{a.s.}\beta'$ (note that $\hat{G}^u(\textbf{n})$ must be of full rank for large $\textbf{n}$, by continuity of the determinant). This shows that plug-in estimation of the inflexion point of each $f_i$ is consistent.
	\\
	\\
	As we know, the minimum of $f$ can only be achieved at either an inflexion point for some $f_i$ or along the intersection (where $f$ might not be differentiable) of two $f_i$'s. Let $B_{inf}=\{\beta_{inf(1)},...,\beta_{inf(k)}\}$ denote the set of inflexion points corresponding to $f_1,...,f_k$ and let $\hat{B}_{inf}(\textbf{n})$ denote the corresponding estimators. If $f_i$ and $f_j$ intersect let $B^{i,j}$ be its set of argmin points along this intersection, this set will be finite outside a zero set in $\Theta^k$ (we will show this later). If $f_i$ and $f_j$ do not intersect then let $B^{i,j}=\emptyset$. Correspondingly, if $\hat{f}_i$ and $\hat{f}_j$ intersect we let $\hat{B}^{i,j}(\textbf{n})$ denote the set of argmin points along this intersection. If $\hat{f}_i$ and $\hat{f}_j$ do not intersect, we set $\hat{B}^{i,j}(\textbf{n})=\emptyset$. Let $B_{int}=B^{1,1}\cup...\cup B^{1,k}\cup B^{2,3}\cup...\cup B^{2,k}\cup...\cup B^{k-1,k}\cup B^{k,k}$ denote all the intersection points between the different $f_i$'s and let $\hat{B}_{int}(\textbf{n})$ denote the corresponding set of estimators. We also let $B=B_{int}\cup B_{inf}$ and $\hat{B}(\textbf{n})=\hat{B}_{int}(\textbf{n})\cup \hat{B}_{inf}(\textbf{n})$. For $\beta\in B^{i,j}$ to be a solution point in $\mathcal{B}$, we must have that $f_i(\beta)=f(\beta)$. Define $\hat{f}(\beta)=\bigvee_{i=1}^k\hat{f}_i(\beta)$. The argmin of $f$ must be achieved at a point in either $\hat{B}_{inf}(\textbf{n})$ or $\hat{B}_{int}(\textbf{n})$, i.e.
	\begin{align}
		&\mathcal{B}=\arg\min_{\beta\in \left(B_{inf} \cup  B_{int}\right)\cap\{\beta\in \R:\hspace{1mm} \exists 1\le i \le k,\hspace{1mm} f(\beta)=f_i(\beta)\}}f(\beta),
	\end{align}
	while
	\begin{align}\label{betaeqemp}
		&\hat{B}(\textbf{n})=\arg\min_{\beta\in \left(\hat{B}_{inf}(\textbf{n}) \cup \hat{B}_{int}(\textbf{n})\right)\cap\{\beta\in\R:\hspace{1mm} \exists 1\le i \le k, \hat{f}(\beta)= \hat{f}_i(\beta)\}} \hat{f}(\beta) .
	\end{align}
	We now construct the vector $V$ as follows, the first $k$ entries are $\beta_{inf(1)},...,\beta_{inf(k)}$ in chronological order. Then we place all the elements of $B^{i,j}$'s in rising order beginning with $B^{0,1}$ and ending with $B^{k-1,k}$ with the convention that we always place elements of $B^{i,j}$ for $i<j$ but not $i\ge j$ (if $B^{i,j}=\emptyset$ then no elements are placed). The individual elements of $B^{i,j}$ are to be ordered lexicographically. This way we can associate each argmin- candidate point with a unique index number in $V$. The number of elements in $V$ will be denoted $\mathcal{M}$. We then construct $\hat{V}_n$ completely analogously from the corresponding estimators and let $\hat{\mathcal{M}}(\textbf{n})$ be the number of elements in $\hat{V}_\textbf{n}$. Let us define the optimal choice indices
	$$L=\arg\min_{1\le l \le M}f(V(l)), $$
	(note that $L$ is a set in general) so that $V(L)=\mathcal{B}$ and similarly let 
	$$\hat{L}_\textbf{n}=\arg\min_{1\le l \le \hat{\mathcal{M}}(\textbf{n})}\hat{f}(\hat{V}_\textbf{n}(l)), $$
	so that $\hat{B}(\textbf{n})= \hat{V}_\textbf{n}(\hat{L}_\textbf{n})$. Let us also define 
	$$\hat{L}_{\textbf{n},\epsilon'}=\{l\not\in \hat{L}_\textbf{n}: \hat{f}(\hat{V}_\textbf{n}(l))-\hat{f}(\hat{V}_\textbf{n}(l'))<\epsilon', l'\in \hat{L}_\textbf{n}\},$$ 
	the set of candidates that when evaluated are within an $\epsilon'$-distance of the chosen argmin candidates. We will show that for large enough $\textbf{n}$, $\mathcal{M}(\textbf{n})=\mathcal{M}$, $|\hat{B}^{i,j}(\textbf{n})|=|B^{i,j}|$ (for all $i,j$) and that $\hat{V}_\textbf{n}(l)\xrightarrow{a.s.}V(l)$ for $1\le l\le \mathcal{M}$. For all $l\not\in L$ there exists $a>0$ such that $V(l)-V(u)>a$, for all $u\in L$. Therefore we can set $\epsilon=a$ as stated in the Theorem. Since $\hat{V}_\textbf{n}(l)\xrightarrow{a.s.}V(l)$ for $1\le l\le \mathcal{M}$ and for large enough $\textbf{n}$, $\mathcal{M}(\textbf{n})=\mathcal{M}$ it follows that if $l\not\in L$, $\hat{V}_\textbf{n}(l)-\hat{V}_\textbf{n}(u)>a$, for large enough $\textbf{n}$, $u\in L$. This implies that $\hat{L}_\textbf{n}\subseteq L$ for large enough $\textbf{n}$, i.e. we always make a valid choice among the candidates (although we might miss some candidates that are close to the plug-in argmin) and since all the candidates converge this proves that $\hat{B}(\textbf{n})\uparrow \mathcal{B}$. Meanwhile for large enough $\textbf{n}$, we also have $\hat{L}_{\textbf{n},\epsilon'}=L$ if $\epsilon'<\epsilon$ which shows that $\dist(\hat{B}_{\epsilon'}(\textbf{n}),\mathcal{B})\to 0$ a.s.. It now remains to show that $\mathcal{M}(\textbf{n})=\mathcal{M}$ for large $\textbf{n}$ and that $\hat{V}_\textbf{n}(l)\xrightarrow{a.s.}V(l)$ for $1\le l\le \mathcal{M}$. For the first statement we already know that all the inflexion points for each $f_i$ will converge so it remains to show that all candidate points from intersections as well as the second statement. We now proceed with showing this, i.e. we must show that with a lexicographical ordering of $B^{i,j}$, $\beta^{i,j}_1<...<\beta^{i,j}_q$ and of $\hat{B}^{i,j}(\textbf{n})$, $\hat{\beta}_1^{i,j}<...<\hat{\beta}_{\hat{q}}^{i,j}$ (with $\hat{q}=q$ for large $\textbf{n}$) we have that $\hat{\beta}^{i,j}_u\xrightarrow{a.s.}\beta^{i,j}_u$ for $u=1,...q$. 
	\\
	\\
	Since each $f_i$ is either convex or concave, by the the Kuhn-Tucker Theorem, a necessary condition for $\beta^*$ to be a minimum point for $f_{i}(\beta)$ subject to $g_{i,j}(\beta)=0$ is that the corresponding Lagrangian, $\mathcal{L}$ fulfils, $\nabla_\beta \mathcal{L}(\beta^*,\lambda^*)=0$ for some $\lambda^*$ and $g_{i,j}(\beta^*)=0$. This will only have a finite set of solutions in terms of $\lambda^*$ (as it will be polynomial in $\lambda^*$) and correspondingly a finite set of $\beta$'s, we then choose whichever one that minimizes $f_{i}$. In case $f_{i}$ is convex (i.e. non-negative weights), the Lagrangian is given by (if it is concave, i.e. non-positive weights, we simply change the sign in front of $\lambda$)
	
	\begin{align*}
		\mathcal{L}(\beta,\lambda)&=f_{i}(\beta)-\lambda g_{i,j}(\beta)
		=
		\sum_{l=0}^p\sum_{u=1}^k\beta_l^2\left(\E\left[w_i(u)X_u(l)^2-\lambda\left( (w_i(u)-w_j(u))X_u(l)^2\right)\right] \right)
		\\
		&-2\sum_{l=1}^p\sum_{u=1}^k\beta_l\E\left[w_i(u)Y_u(X_u(l))-\lambda\left((w_i(u)-w_j(u))Y_i(X_u(l))\right)\right]
		\\
		&+2\sum_{l_1=0}^p\sum_{l_2\not=l_1}^p\sum_{u=1}^k\beta_{l_1}\beta_{l_2}\E\left[w_i(u)X_u(l_1)X_u(l_2)-\lambda\left((w_i(u)-w_j(u))X_u(l_1)X_u(l_2)\right)\right]
		\\
		&+
		\sum_{u=1}^k\E\left[w_i(u)Y_u^2-\lambda \left((w_i(u)-w_j(u))Y_u^2\right)\right]+\kappa_i-\lambda(\kappa_i-\kappa_j)
		\\
		&=\sum_{l=1}^p\beta_l^2\left(a_i(l)-\lambda\left( a_i(l)-a_j(l)\right) \right)
		-2\sum_{l=1}^p\beta_l\left(b_i(l)-\lambda\left(b_i(l)-b_j(l)\right)\right)
		\\
		&+2\sum_{l_1=0}^p\sum_{l_2\not=l_1}^p\beta_{l_1}\beta_{l_2}\left(a_i(l_1,l_2)-\lambda\left(a_i(l_1,l_2)-a_j(l_1,l_2)\right)\right)
		+c_i-\lambda\left(c_i-c_j\right),
	\end{align*}
	where 
	\begin{itemize}
		\item $a_i(l_1,l_2)=\sum_{u=1}^kw_i(u)\E\left[X_u(l_1)X_u(l_2)\right]$,
		\item $a_i(l)=\sum_{u=1}^kw_i(u)\E\left[X_u(l)^2\right]$,
		\item $b_i(v)=\sum_{u=1}^kw_i(u)\E\left[X_u(v)Y_u\right]$ and
		\item $c_i=\kappa_i+\sum_{u=1}^kw_i(u)\E\left[Y_u^2\right]$.
	\end{itemize}
	Setting $\nabla_\beta\mathcal{L}(\beta,\lambda)=0$ yields $M^{i,j}(\lambda)\beta=C^{i,j}_{\lambda}$, in other words
	\begin{align*}
		&\beta_l\left(\E\left[(X_i(l))^2-\lambda\left( (X_i(l))^2-(X_j(l))^2\right)\right] \right)
		=
		\E\left[Y_i(X_i(l))-\lambda\left(Y_i(X_i(l))-Y^{A_j}(X_j(l))\right)\right]
		-
		\\
		&\sum_{l_2\not=l}^p\beta_{l_2}\E\left[X_i(l)X_i(l_2)-\lambda\left(X_i(l)X_i(l_2)-X_j(l)X_j(l_2)\right)\right],
	\end{align*}
	where we define the $p\times p$ matrix $M^{i,j}(\lambda)$ and the vector $C^{i,j}(\lambda)$ as in Definition \ref{parammatrix} (and substitute in our definitions of $a_i,c_i$ and $b_i$) i.e. if $i$ is such that $f_i$ has non-negative weights,
	$$M^{i,j}(\lambda)_{l,l} =a_i(l)-\lambda\left(a_i(l)-a_j(l)\right)$$
	when $u\not=v$
	$$M^{i,j}(\lambda)_{u,v} =a_i(u,v)-\lambda\left(a_i(u,v)-a_j(u,v)\right)$$
	and
	$$C^{i,j}(\lambda)_u=b_i(u)-\lambda\left(b_i(u)-b_j(v)\right).$$
	While if $f_i$ has non-positive weights we change the sign in front of $\lambda$.
	Let $P(\lambda)=\det\left(M^{i,j}(\lambda)\right)$. We will now establish the fact that outside a set of measure zero in $\Theta^k$, $M^{i,j}(\lambda)\beta=C^{i,j}(\lambda)$ has no solutions for any $\lambda\in\R$ such that $P(\lambda)=0$. By Lemma \ref{matrixLemma}, outside a set of measure zero $N_2\in\Theta^k$, $\mathrm{rank}(M^{i,j}(\lambda))=p-1$ for $\lambda\in\Lambda$ and 
	\begin{align}\label{linearcomb}
		&M^{i,j}_{p,.}(\lambda)=\sum_{u=1}^{p-1}s_u(\theta^{(k)},\lambda)M_{u,.}(\lambda)
	\end{align}
	with all $s_u(\theta^{(k)},\lambda)\not=0$. From Lemma \ref{matrixNoTiesLemma}, we see that outside a zero set $N_3\in\Theta^k$, $\{s_u(\theta^{(k)},\lambda)\}_{u=1}^{p-1}$ are all rational functions on $\R\times\Theta^k$. Applying the same row operations to the vector $C^{i,j}(\lambda)$ as $M^{i,j}(\lambda)$, we now have that $M^{i,j}(\lambda)\beta=C^{i,j}(\lambda)$ has no solutions if
	$$\left(C^{i,j}_{\lambda}(p)-\sum_{v\not=p}s_v(\theta,\lambda) C^{i,j}_{\lambda}(v)\right)\not=0.$$
	Let $G$ denote the lowest common denominator of the terms in $C^{i,j}_{\lambda}(p)-\sum_{v\not=p}s_v(\theta^{(k)},\lambda) C^{i,j}_{\lambda}(v)$ then 
	$$Q(\theta^{(k)},\lambda)= G(\theta^{(k)},\lambda)\left(C^{i,j}_{\lambda}(p)-\sum_{v\not=p}s_v(\theta^{(k)},\lambda) C^{i,j}_{\lambda}(v)\right)$$ 
	is a polynomial on $\R\times\Theta^k$. To show that $Q$ has no roots in $\Lambda$ is equivalent to showing that $P$ and $Q$ have no common roots, which is in turn equivalent to showing that their resultant is not zero. Again, we need to show that there exists $\theta^{(k)}\in\Theta^k$ such that $P$ and $Q$ have no common roots. Take the matrix $\tilde{M}$ from the proof of Lemma \ref{matrixNoTiesLemma}. As we already have seen for $\tilde{M}$, $s_1\equiv...\equiv s_{p-1}\equiv 1$. We readily see that in this case $G\equiv 1$ (since $G$ is the lowest common denominator of $\{s_u\}_{u=1}^{p-1}$), so any choice of parameters that leads to the first degree polynomial $C^{i,j}_{\lambda}(p)-\sum_{v\not=p}s_v(\theta^{(k)},\lambda) C^{i,j}_{\lambda}(v)$ not having a root in $\lambda=0$, shows that outside a zero set $N_4$ there are no solutions to $M^{i,j}(\lambda)\beta=C^{i,j}(\lambda)$ for any $\lambda\in\Lambda$. We continue with the case when $\lambda\in\R$ is such that $M^{i,j}(\lambda)$ is full rank. If $M^{i,j}(\lambda)$ is full-rank then $\beta(\lambda)=(M^{i,j})^{-1}(\lambda)C^{i,j}_{\lambda}$, to find $\lambda$ we solve $g_{i,j}(\beta(\lambda))=0$. Note that 
	$$\beta(\lambda)_u=\frac{1}{\det\left(M^{i,j}(\lambda)\right)}\sum_{v=1}^p\det\left(M^{i,j}_{u,v}(\lambda)\right)C^{i,j}(\lambda)(v),$$ 
	where we again use the convention that $M^{i,j}_{u,v}$ is the matrix $M^{i,j}$ with row $u$ and column $v$ removed. So for $\lambda\not\in\Lambda$
	\small
	\begin{align*}
		&g_{i,j}(\beta(\lambda))=\left(a_i(l)-a_j(l)\right)\sum_{l=1}^p\left(\frac{1}{\det\left(M^{i,j}(\lambda)\right)}\sum_{v=1}^p\det\left(M^{i,j}_{l,v}(\lambda)\right)(-1)^{l+v}C^{i,j}(\lambda)(v)\right)^2+c_i-c_j
		\\
		&-2\left(b_i(l)-b_j(l)\right)\sum_{l=1}^p\left(\frac{1}{\det\left(M^{i,j}(\lambda)\right)}\sum_{v=1}^p\det\left(M^{i,j}_{l,v}(\lambda)\right)(-1)^{l+v}C^{i,j}(\lambda)(v)\right)
		\\
		&+2\left(a_i(l_1,l_2)-a_j(l_1,l_2)\right)\sum_{l_1=1}^p\sum_{l_2\not=l_2}\left(\frac{1}{\det\left(M^{i,j}(\lambda)\right)}\sum_{v=1}^p\det\left(M^{i,j}_{l_1,v}(\lambda)\right)(-1)^{l_1+v}C^{i,j}(\lambda)(v)\right)
		\\
		&\times\left(\frac{1}{\det\left(M^{i,j}(\lambda)\right)}\sum_{v=1}^p\det\left(M_{l_2,v}(\lambda)\right)(-1)^{l_2+v}C^{i,j}(\lambda)(v)\right).
	\end{align*}
	\normalsize
	Furthermore from the above expression we see that for $\lambda\not\in\Lambda$, $g_{i,j}(\beta(\lambda))$ has the same roots as 
	$$\tilde{P}(\lambda)=\det\left(M^{i,j}(\lambda)\right)^2g_{i,j}(\beta(\lambda)),$$ 
	which is a polynomial. Outside a set of measure zero $N_5\in\Theta^k$, $\tilde{P}(\lambda)$ has only simple roots, according to Lemma \ref{discriminantLemma}. Let us now define $N=N_1\cup...\cup N_5$. Letting $\mathcal{R}$ denote the (finite) set of roots to $g_{i,j}(\beta(\lambda))=0$ then the set of intersection $\arg\min$'s will now be given by 
	$$B^{i,j}=\left\{M^{-1}(\lambda^{i,j})C_{\lambda^{i,j}}: \lambda^{i,j}\in\arg\min_{\lambda\in\mathcal{R}}f_i\left(M^{-1}(\lambda)C^{i,j}(\lambda)\right)\right\}.$$
	From here on out we fix $\theta^{(k)}\in\Theta^k\setminus N$. We now consider the plug-in estimator of the minima along the intersection between $f_i$ and $f_j$,
	$$\arg\min_{\beta\in\R^p} \hat{f}_i(\beta),\texttt{ subject to } \hat{f}_i(\beta)=\hat{f}_j(\beta).$$
	We also consider the plug-in Lagrangian
	\begin{align*}
		&\left(\hat{\mathcal{L}}(\beta,\lambda)\right)(\textbf{n})=\left(\hat{f}_i(\beta)\right)(\textbf{n})-\lambda \hat{g}_{i,j}(\beta)
		=
		\sum_{l=1}^p\beta_l^2\left(-\lambda\left( \hat{a}_i(l)-\hat{a}_j(l)\right) \right)
		\\
		&-2\sum_{l=1}^p\beta_l\left(\hat{b}_i(l)-\lambda\left(\hat{b}_i(l)-\hat{b}_j(l)\right)\right)
		+2\sum_{l_1=0}^p\sum_{l_2\not=l_1}^p\beta_{l_1}\beta_{l_2}\left(\hat{a}_i(l_1,l_2)-\lambda\left(\hat{a}_i(l_1,l_2)-\hat{a}_j(l_1,l_2)\right)\right)
		\\
		&+\hat{c}_i-\lambda\left(\hat{c}_i-\hat{c}_j\right),
	\end{align*}
	where 
	\begin{itemize}
		\item $\hat{g}_{i,j}(\beta)=\hat{f}_i(\beta)-\hat{f}_j(\beta)$,
		\item $\hat{a}_i(l)=\sum_{u=1}^kw_i(u)\frac{1}{n_u}\sum_{v=1}^{n_i}X_{u,v}(l)^2$,
		\item $\hat{a}_i(l_1,l_2)=\sum_{u=1}^kw_i(u)\frac{1}{n_u}\sum_{v=1}^{n_u}X_{u,v}(l_1)X_{u,v}(l_2)$,
		\item $\hat{b}_i(l)=\sum_{u=1}^kw_i(u)\frac{1}{n_u}\sum_{v=1}^{n_i}X_{u,v}(l)Y_{u,v}$, and
		\item $\hat{c}_i=\kappa_i+\sum_{u=1}^kw_i(u)\frac{1}{n_u}\sum_{v=1}^{n_u}Y_{u,v}^2$.
	\end{itemize}
	Then similarly to before we see that solving $\nabla_\beta \left(\hat{\mathcal{L}}(\beta,\lambda)\right)(\textbf{n})=0$ leads to $\widehat{M}^{i,j}(\lambda)\beta=\widehat{C}^{i,j}_\lambda$, where 
	$$\widehat{C}^{i,j}_u(\lambda)=\hat{b}_i(u)-\lambda\left(\hat{b}_i(u)-\hat{b}_j(v)\right), $$
	$$\widehat{M}^{i,j}_{l,l}(\lambda)=\hat{a}_i(l)-\lambda\left(\hat{a}_i(l)-\hat{a}_j(l)\right),$$
	and when $u\not=v$
	$$\widehat{M}^{i,j}(\lambda)_{u,v} =-\left(\hat{a}_i(u,v)-\lambda\left(\hat{a}_i(u,v)-\hat{a}_j(u,v)\right)\right).$$
	Let $\hat{\Lambda}_n$ denote the set of roots (in terms of $\lambda)$ to $\hat{P}(\lambda)= \det\left(\hat{M}^{i,j}(\lambda)\right)=0$ and let $\hat{\Lambda}_\R(\textbf{n})$ denote the real roots of this polynomial. We wish to establish that for large enough $\textbf{n}$, $\hat{M}^{i,j}(\lambda)\beta=\hat{C}^{i,j}_\lambda$ has no solutions for $\lambda\in\hat{\Lambda}_n$. Recall that $P$ only has simple roots which implies, by Lemma \ref{simpleroots} and the law of large numbers that the elements in $\hat{\Lambda}_\R(\textbf{n})$ converge a.s. to those in $\Lambda_\R$, where we let $\Lambda_\R=\{\lambda_1,...,\lambda_m\}$ (where $\lambda_1<...<\lambda_m$ and $m$ depends on $\theta^{(k)}$ but $\theta^{(k)}$ is fixed at this point) denote the real roots of $P$. Note that by the proof of Lemma \ref{matrixLemma}, for each $\lambda\in\Lambda$ (and therefore also in $\Lambda_\R$) the first $p-1$ first minors along the first column in $M^{i,j}(\lambda)$ is bounded below by some $\delta>0$ ($\delta>0$ also depends on $\theta^{(k)}$, but $\theta^{(k)}$ is fixed), i.e. $\left|\det\left(M_{u,1}(\lambda)\right)\right|>\delta$ for each $\lambda\in\Lambda$ and $1\le u\le p-1$. For large enough $\textbf{n}$, the number of elements in $\hat{\Lambda}_n$ and $\Lambda$ coincides by Lemma \ref{simpleroots}. From Lemma \ref{simpleroots} we also have $\hat{\lambda}_z\xrightarrow{a.s.}\lambda_z$, for $1\le z\le m$. Since the determinant is continuous it follows that $\left|\det\left(\hat{M}^{i,j}_{1,v}(\hat{\lambda}_z)\right)\right|>\delta$, $v=1,...,p-1$ for large enough $\textbf{n}$. Therefore $\mathsf{rank}\left( \hat{M}^{i,j}(\hat{\lambda}_z)\right)=p-1$ and we have the unique representation (in terms of the rows $\hat{M}^{i,j}_{u,.}(\lambda)$)
	\begin{align}\label{linearcombhat}
		&\hat{M}^{i,j}_{p,.}(\hat{\lambda}_z)=\sum_{u=1}^{p-1}\hat{s}_u(\hat{\lambda}_z)\hat{M}^{i,j}_{u,.}(\hat{\lambda}_z)
	\end{align}
	for all $\hat{\lambda}_z\in\hat{\Lambda}(\textbf{n})_\R$ with all $\hat{s}_u(\hat{\lambda}_z)\not=0$. Note that 
	\begin{align}\label{Mdiff}
		&\n \hat{M}^{i,j}(\hat{\lambda}_z)-M(\lambda_z) \n\le \n \hat{M}^{i,j}(\hat{\lambda}_z-\lambda_z) \n+\n\hat{M}^{i,j}(\lambda_z)-M(\lambda_z) \n,
	\end{align}
	for the first term above $\n \hat{M}^{i,j}(\hat{\lambda}_z-\lambda_z) \n=\left|\hat{\lambda}_z-\lambda_z\right|\n\hat{M}'(\textbf{n})\n$, where (assuming $f_i$ has non-negative weights) $\hat{M}'_{u,v}(\textbf{n})=\hat{a}_i(u,v)-\hat{a}_j(u,v)$ ($\hat{M}'_{u,v}(\textbf{n})=\hat{a}_i(u,v)+\hat{a}_j(u,v)$ if $f_i$ has non-positive weights) when $u\not=v$ and $\hat{M}'_{l,l}(\textbf{n})=\hat{a}_i(l)-\hat{a}_j(l)$ ($\hat{M}'_{l,l}(\textbf{n})=\hat{a}_i(l)+\hat{a}_j(l)$ if $f_i$ has non-positive weights). By the law of large numbers $\hat{M}'(\textbf{n})$ converges to the matrix $M'$ with $M'_{u,v}=a_i(u,v)-a_j(u,v)$ when $u\not=v$ and $M'_{l,l}=a_i(l)-a_j(l)$. Due to the fact that $\hat{\lambda}_z\xrightarrow{a.s.}\lambda_z$ it is therefore clear that the first term in \eqref{Mdiff} vanishes. The second therm in \eqref{Mdiff} will vanish by the law of large numbers, hence $\n \hat{M}^{i,j}(\hat{\lambda}_z)-M^{i,j}(\lambda_z) \n\xrightarrow{a.s.}0$.
	Due to the fact that $\n \hat{M}^{i,j}(\hat{\lambda}_z)-M^{i,j}(\lambda_z) \n\xrightarrow{a.s.}0$ and Lemma \ref{coefficientslemma} it follows that $\hat{s}_u(\hat{\lambda}_z)\xrightarrow{a.s.}s_u(\lambda_z)$ for $u=1,...,p-1$ and $1\le z\le m$. Let
	$$H(\lambda)= G(\lambda)Q(\lambda)= \left(C^{i,j}_{\lambda}(p)-\sum_{v\not=p}s_v(\theta,\lambda) C^{i,j}_{\lambda}(v)\right),$$
	we know that since we chose $\theta^{(k)}\not\in N$, $|H(\lambda)|>\delta'$ (since $Q(\lambda)\not=0$) for some $\delta'>0$ and every $\lambda\in \Lambda(\theta^{(k)})$. We now let 
	$$\hat{H}(\lambda)= \left(\hat{C}^{i,j}_{\lambda}(p)-\sum_{v\not=p}\hat{s}_v(\lambda) \hat{C}^{i,j}_{\lambda}(v)\right),$$ 
	it follows that $\hat{H}(\hat{\lambda}_z)\xrightarrow{a.s.}H(\lambda_z)$ by the fact that $\hat{s}_u(\hat{\lambda}_z)\xrightarrow{a.s.}s_u(\lambda_z)$ and $\hat{C}^{i,j}_{\hat{\lambda}_z}(v)\xrightarrow{a.s.}C^{i,j}_{\lambda_z}(v)$ for $v=1,...p$ (this can be proved by employing the same exact strategy as for \eqref{Mdiff}). So for large $\textbf{n}$, $|\hat{H}(\hat{\lambda}_z)|>\delta$ which implies that there are no solutions to $\hat{M}^{i,j}(\lambda)\beta=\hat{C}^{i,j}_\lambda$ for $\lambda\in \hat{\Lambda}_\R(\textbf{n})$ for large enough $\textbf{n}$. Let $\hat{\beta}(\lambda)=(\hat{M}^{i,j})^{-1}(\lambda)\hat{C}^{i,j}_\lambda$ for $\lambda\not\in \hat{\Lambda}_n$. To find any candidates for $\hat{\lambda}$ we then solve $\hat{g}_{i,j}(\beta(\lambda))=0$. Similarly to before we see that
	\small
	\begin{align}\label{groots}
		\hat{g}_{i,j}(\beta(\lambda))&=\sum_{l=1}^p\left(\frac{1}{\det\left(\hat{M}^{i,j}(\lambda)\right)}\sum_{v=1}^p\det\left(\hat{M}^{i,j}_{l,v}(\lambda)\right)(-1)^{l+v}\hat{C}^{i,j}_\lambda(v)\right)^2\left(\hat{a}_i(l)-\hat{a}_j(l)\right)+\hat{c}_i-\hat{c}_j\nonumber
		\\
		&-2\sum_{l=1}^p\left(\frac{1}{\det\left(\hat{M}^{i,j}(\lambda)\right)}\sum_{v=1}^p\det\left(\hat{M}^{i,j}_{l,v}(\lambda)\right)(-1)^{l+v}\hat{C}^{i,j}_\lambda(v)\right)\left(\hat{b}_i(l)-\hat{b}_j(l)\right)\nonumber
		\\
		&+2\left(\hat{a}_i(l_1,l_2)-\hat{a}_j(l_1,l_2)\right)\sum_{l_1=1}^p\sum_{l_2\not=l_2}\left(\frac{1}{\det\left(\hat{M}^{i,j}(\lambda)\right)}\sum_{v=1}^p\det\left(\hat{M}^{i,j}_{l_1,v}(\lambda)\right)(-1)^{l_1+v}\hat{C}^{i,j}_\lambda(v)\right)
		\nonumber
		\\
		&\times\left(\frac{1}{\det\left(\hat{M}^{i,j}(\lambda)\right)}\sum_{v=1}^p\det\left(\hat{M}^{i,j}_{l_2,v}(\lambda)\right)(-1)^{l_2+v}\hat{C}^{i,j}_\lambda(v)\right).
	\end{align}
	\normalsize
	Analogously to the population case, if let $\hat{\tilde{P}}(\lambda)=\det\left(\hat{M}^{i,j}(\lambda)\right)^2\hat{g}_{i,j}(\beta(\lambda))$ then $\hat{\tilde{P}}(\lambda)$ has the same roots as $\hat{g}_{i,j}(\beta(\lambda))$. By the law of large numbers and continuity we see that $\det\left(\hat{M}^{i,j}(\lambda)\right)\xrightarrow{a.s.}\det\left( M^{i,j}(\lambda)\right)\not=0$ (since $\lambda\not\in\Lambda$), $\det\left(\hat{M}^{i,j}_{u,v}(\lambda)\right)\xrightarrow{a.s.} \det\left(M^{i,j}_{u,v}(\lambda)\right)$ for all $u,v$, $\hat{b}_.(.)\xrightarrow{a.s.}b_.(.)$, $\hat{a}_.(.)\xrightarrow{a.s.}a_.(.)$, $\hat{a}_.(.,.)\xrightarrow{a.s.}a_.(.,.)$ and finally that $\hat{c}_.\xrightarrow{a.s.}c_.$. We can thus conclude that all coefficients of $\hat{\tilde{P}}$ converge to the corresponding coefficients of $\tilde{P}$ and therefore, by Lemma \ref{simpleroots}, the real roots of $\hat{\tilde{P}}$ converge to those of $\tilde{P}$ and that for large enough $\textbf{n}$, the number of such roots of $\tilde{P}$ and $\hat{\tilde{P}}$ coincides. Letting $\mathcal{R}=\{\lambda_1,...,\lambda_m\}$ (with $\lambda_1<...<\lambda_{m}$) and $\hat{\mathcal{R}}=\{\hat{\lambda}_1,...,\hat{\lambda}_{\hat{m}}\}$ (with $\hat{\lambda}_1<...<\hat{\lambda}_{\hat{m}}$) denote the real roots (which are simple) of $\tilde{P}$ and $\hat{\tilde{P}}$ respectively we have that $\hat{\lambda}_u\xrightarrow{a.s.}\lambda_u$ for $u=1,..,m$ and for large $\textbf{n}$, $\hat{m}=m$, i.e. the elements of $\hat{\mathcal{R}}$ converge to those of $\mathcal{R}$. We now define
	$$\hat{B}^{i,j}(\textbf{n})=\left\{\hat{M}^{-1}(\lambda^{i,j})\hat{C}^{i,j}_{\lambda^{i,j}}: \lambda^{i,j}\in\arg\min_{\lambda\in\hat{\mathcal{R}} }\hat{f}_i\left((\hat{M}^{i,j})^{-1}(\lambda)\hat{C}^{i,j}_\lambda\right)\right\}. $$
	By the law of large numbers and continuity (since $\det\left(M^{i,j}(\lambda)\right)\not=0$ for $\lambda\in\Lambda$) 
	$$(\hat{M}^{i,j})^{-1}(\lambda)\hat{C}^{i,j}_{\lambda}\xrightarrow{a.s.}(M^{i,j})^{-1}(\lambda)C^{i,j}(\lambda),$$ 
	for $\lambda\not\in\Lambda$. Therefore $\max_{\beta\in\hat{B}^{i,j}}d\left(\beta,B^{i,j}\right)\xrightarrow{a.s.}0$ as well as $|\hat{B}^{i,j}(\textbf{n})|=|B^{i,j}|$ for large $\textbf{n}$.
	With a lexicographical ordering of $B^{i,j}$, $\beta^{i,j}_1<...<\beta^{i,j}_q$ and of $\hat{B}^{i,j}(\textbf{n})$, $\hat{\beta}_1^{i,j}<...<\hat{\beta}_{\hat{q}}^{i,j}$ (with $\hat{q}=q$ for large $\textbf{n}$) we have that $\hat{\beta}^{i,j}_u\xrightarrow{a.s.}\beta^{i,j}_u$ for $u=1,...q$, as we set out to prove.
\end{proof}

	\bibliographystyle{plainnat}
	\bibliography{bibliopaper}

\end{document}